\def\@citecolor{blue}
\def\@urlcolor{blue}
\def\@linkcolor{blue}
\def\theequation{\thesection.\@arabic \c@equation}
\def\@citecolor{blue}
\def\@urlcolor{blue}
\def\@linkcolor{blue}
\def\theenumi{\@roman\c@enumi}
\theoremstyle{plain}
\newtheorem{theorem}[equation]{Theorem}
\newtheorem{lemma}[equation]{Lemma}
\newtheorem{corollary}[equation]{Corollary}
\newtheorem{proposition}[equation]{Proposition}
\theoremstyle{definition}
\newtheorem{remark}[equation]{Remark}
\newtheorem{remarks}[equation]{Remarks}
\newtheorem{example}[equation]{Example}
\newtheorem{definition}[equation]{Definition}
\newtheorem{conjecture}[equation]{Conjecture}
\def\NZQ{\mathbb}               
\def\NN{{\NZQ N}}
\def\frk{\mathfrak}               
\def\aa{{\frk a}}
\def\bb{{\frk b}}
\def\cc{{\frk c}}  
\def\mm{{\frk m}}
\def\opn#1#2{\def#1{\operatorname{#2}}} 
\opn\chara{char}
\opn\length{\ell}
\opn\projdim{proj\,dim}
\opn\depth{depth}
\opn\reg{reg}
\opn\lreg{lreg}
\opn\sat{^{sat}}
\opn\lex{^{lex}}
\opn\Ker{Ker}
\opn\Coker{Coker}
\opn\Im{Im}
\opn\Hom{Hom}
\opn\Tor{Tor}
\opn\Ext{Ext}
\opn\End{End}
\opn\Aut{Aut}
\opn\id{id}
\opn\GL{GL}
\let\ov\overline
\opn\Gin{Gin}
\opn\Hilb{Hilb}
\opn\ini{in}
\opn\End{end}
\DeclareMathOperator{\hP}{\mathcal H}
\DeclareMathOperator{\iP}{\mathcal I}
\begin{document}
\title{The Lex-Plus-Power inequality for Local Cohomology modules}

\author{Giulio Caviglia}
\address{Giu\-lio Ca\-vi\-glia - Department of Mathematics -  Purdue University - 150 N. University Street, West Lafayette - 
  IN 47907-2067 - USA} 
\email{gcavigli@math.purdue.edu}
\author{Enrico Sbarra}
\address{Enrico Sbarra - Dipartimento di Matematica - Universit\`a degli Studi di Pisa -Largo Bruno Pontecorvo 5 - 56127 Pisa - Italy}
\email{sbarra@dm.unipi.it}
\thanks{The work of the first author was supported by a grant from the
Simons Foundation (209661 to G. C.)}
\subjclass[2010]{Primary 13D02,13D45; Secondary 14B15}

\begin{abstract}
  We prove an inequality between Hilbert functions of local cohomology modules supported in the homogeneous maximal ideal of standard graded algebras over a field, within the framework of embeddings of posets of Hilbert functions. As a main application,  we prove an analogue for local cohomology of Evans' Lex-Plus-Power Conjecture for Betti numbers. This results implies some cases of the classical Lex-Plus-Power Conjecture, namely an inequality between extremal Betti numbers.
In particular, for the classes of ideals for which the Eisenbud-Green-Harris Conjecture is currently known, the projective dimension and the Castelnuovo-Mumford regularity of a graded ideal do not decrease by passing to the corresponding Lex-Plus-Power ideal.
\end{abstract}

\keywords{Eisenbud-Green-Harris Conjecture, Lex-Plus-Power Conjecture, embeddings of Hilbert functions, local cohomology}
\date{\today}

\maketitle

\section*{Introduction}
The Eisenbud-Green-Harris ({\bf EGH}) and Evans' Lex-Plus-Power ({\bf LPP}) conjectures are two open problems in Algebraic Geometry and Commutative Algebra which are challenging and of great interest to  researchers in these fields, cf. \cite{EiGrHa1}, \cite{EiGrHa2},   \cite{MePeSt}, \cite{MeMu}, \cite{FrRi}, \cite{Pe}, \cite{CaMa}, \cite{Ch}, \cite{Fr},  \cite{Ri}, \cite{RiSa}. The survey \cite{PeSt}, which includes  the two conjectures above, might offer the interested reader an overview of questions which are currently considered to be significant for the classification of Hilbert functions and the study of modules of syzygies. A milestone on this subject is yielded by the work of Macaulay, cf. \cite{Ma} or the dedicated sections in \cite{BrHe}, where the sequences of numbers which are possible Hilbert functions of standard graded algebras over a field are characterized, the characterization having made possible by the introduction of a special class of monomial ideals called lexicographic ideals (lex-segment ideals for short). More precisely, all the possible Hilbert functions of  standard graded algebras  over a field  are attained by quotients of  polynomial rings by  lex-segment ideals. 
 In the 60's, some forty years after Macaulay's work,   Kruskal-Katona Theorem \cite{Kr}, \cite{Ka} provided another fundamental classification result, that of $f$-vectors of simplicial complexes, and shortly after it was generalized by the Clements-Lindstr\"om Theorem \cite{ClLi}. Both  Kruskal-Katona and  Clements-Lindstr\"om Theorems  extend Macaulay Theorem from graded quotients of $A=K[X_1,\dots,X_n]$ to graded quotients of $R=A/\aa$, where $\aa=(X_1^{d_1},\ldots,X_r^{d_r})$ with $d_1=\cdots=d_r=2$ and $2\leq d_1\leq\cdots\leq d_r$ resp., by stating that all the possible Hilbert functions  are those attained by quotients whose defining ideals are images  in $R$  of lex-segments ideals of $A$. Inspired by these results and driven by the of generalizing the famous Cayley-Bacharach Theorem, Eisenbud, Green and Harris \cite{EiGrHa1}, \cite{EiGrHa2}  conjectured among other things what is currently known as {\bf EGH}: Let ${\bf f}=f_1,\ldots,f_r$ be a regular sequence of homogeneous polynomials of degrees $d_1\leq\cdots\leq d_r$ in  $A$. Then, for every homogeneous ideal $I\subset A$ containing ${\bf f}$, there exists a lex-segment ideal $L$ of $A$ such that  $I$ and $L + (X_1^{d_1},\ldots,X_r^{d_r})$ have the same Hilbert function. Since then, the conjecture has been proven only in some special cases, cf. \cite{CaMa}, \cite{Ch} and \cite{FrRi}.\\
Furthermore, starting in the early 90's, lex-segment ideals - and other monomial ideals with strong combinatorial properties - have been studied extensively; properties of lex-segment ideals have been generalized
and studied also in other contexts, see for instance  \cite{ArHeHi1}, \cite{ArHeHi2},  generating a very rich literature on the subject. Among other results of this kind, we recall the following ones. The lex-segment ideal in the family of all homogeneous ideals with a given Hilbert function has largest Hilbert functions of local cohomology modules, which was proved in \cite{Sb}. Also, it has largest   graded Betti numbers, as it was shown  \cite{Bi}, \cite{Hu} and \cite{Pa}. In a different direction these are other extensions of Macaulay's result. Evans' Lex-Plus-Power Conjecture extends in this sense the Eisenbud-Green-Harris Conjecture to Betti numbers, by asking whether, in case {\bf EGH} holds true, a Bigatti-Hulett-Pardue type of result holds as well, i.e. {\bf LPP}: Suppose that a regular sequence $\mathbf{f}$ verifies {\bf EGH};   then the graded Betti numbers over $A$ of every homogeneous ideal $I$ containing ${\bf f}$ are smaller than or equal to those of $L + (X_1^{d_1},\ldots,X_r^{d_r})$. The conjecture is known in some few cases, the most notable one is when ${\bf f}$ is a monomial regular sequence, which is solved first in \cite{MePeSt} when $d_1=\dots=d_r=2$  and then in general in \cite{MeMu}.\\
We now take a step back, and recall that the graded Betti numbers $\beta^A_{ij}(A/I)$   are precisely the dimension as a $K$-vector space of $\Tor_i^A(A/I,K)_j$. In other words, once we have fixed $i$, we can think the sequence of $\beta^A_{ij}(A/I)$ as the Hilbert function of $\Tor_i^A(A/I,K)$, which is computed by means of a minimal graded free resolution of $A/I$. If we let $\Hilb\left(M\right)$ denote the Hilbert series of a graded module $M$, then we may restate {\bf LPP} as coefficient-wise inequalities between the Hilbert series of such $\Tor$'s: if ${\bf f}$ satisfies {\bf EGH} then for all homogeneous ideals $I$ of $A$ containing ${\bf f}$ 
$$\phantom{a}({\bf LPP})\phantom{aaaaa} \Hilb\left(\Tor_i^A(A/I,K)\right)\leq \Hilb\left(\Tor_i^A(A/(L+(X_1^{d_1},\ldots,X_r^{d_r})),K)\right),\phantom{a} \hbox{ for all } i.\phantom{a}$$
In this paper we study  analogous inequalities for Hilbert series of local cohomology modules. Let $H^i_{\mm}(\bullet)$ denote the $i^{\rm th}$ local cohomology module of a graded object with support in the graded maximal ideal. One of our main results is  Theorem \ref{mainLPP}, where we show that, if the image of ${\bf f}$ in a suitable quotient ring of $A$ satisfies {\bf EGH} (as it does in all the known cases \cite{ClLi}, \cite{CaMa}, \cite{Ch} and \cite{Ab}) then for all  homogeneous ideals $I$ of $A$ containing ${\bf f}$ 
\[
(\text{\bf {Theorem}$\;$\ref{mainLPP}}) \phantom{aaa}\Hilb\left(H^i_{\mm}(A/I)\right)\leq \Hilb\left(H^i_{\mm}(A/(L+(X_1^{d_1},\ldots,X_r^{d_r})))\right), \phantom{aa} \hbox{ for all } i.\phantom{aa}
\]
Our approach makes use of {\em embeddings of Hilbert functions}, which have been recently introduced by the first author and Kummini in \cite{CaKu} with the intent of finding a new path to the classification of Hilbert functions of quotient rings. Since ${\bf EGH}$ may be rephrased by means  of embeddings, as we explain in Section 2,  it is natural to study inequalities of the above type in this generality. In this setting we prove our main result Theorem \ref{main}, which implies Theorem \ref{mainLPP}. We let $R, S$ be standard graded $K$-algebras such that $R$ embeds into $(S,\epsilon)$, see Definition \ref{embedda}. We also assume that,  for all homogeneous ideals $I$ of $R$, 
 $\Hilb\left(H^i_{\mm_{R}}(R/I)\right)\leq \Hilb\left(H^i_{\mm_{S}}(S/\epsilon(I))\right)$ for all $i$. Then, Theorem \ref{main} states that the polynomial ring $R[Z]$ embeds into $(S[Z],\epsilon_1)$ and, for all homogeneous ideals $J$ of $S[Z]$ and for all $i$,  
$$ \Hilb\left(H^i_{\mm_{R[Z]}}(R[Z]/J)\right)\leq \Hilb\left(H^i_{\mm_{S[Z]}}(S[Z]/\epsilon_1(J))\right).$$

\noindent
Finally, in Theorem \ref{extremalLPP} we prove   {\bf LPP} for extremal Betti numbers: under the same assumption of Theorem \ref{mainLPP} for all homogeneous ideals $I$ of $A$ containing ${\bf f}$ and for all corners $(i,j)$ of $A/L+(X_1^ {d_1},\ldots,x_r^{d_r})$ we have
$$(\text{\bf {Theorem}$\;$\ref{extremalLPP}})\phantom{aaaaaaaaaaaaa} 
\beta^A_{ij}(A/I)\leq \beta^A_{ij}(A/L+(X_1^ {d_1},\ldots,x_r^{d_r})). \phantom{aaaaaaaaaaaaaaa}$$
\noindent
This paper is structured as follows. In Section 1 we introduce some general notation and we discuss the basic properties of certain ideals called $Z$-stable, together with all the related technical results needed, such as  distractions. 
In Section 2 we provide a brief summary of embeddings of Hilbert functions and we recall in  Theorem \ref{hyperplane} a General Restriction Theorem type of result proved in \cite{CaKu}.  This is aimed at setting the general framework for our main theorem and leads to the proof of Proposition \ref{hyperembed}, which is the other main tool we need.
Section 3 is devoted to our main theorem, Theorem \ref{main}. We show there that if a ring $R$ admits an embedding of Hilbert functions and its embedded ideals $\epsilon(I)$ maximize all the Hilbert functions of local cohomology modules $H^i_{\mm}(R/\bullet)$, the same is true for any polynomial ring with coefficients in $R$.
In Section 4 we explain how to derive from Theorem \ref{main} our main corollary, Theorem \ref{mainLPP}, a lex-plus-power type inequality for local cohomology which justifies the title.
Finally, in the last section we prove the validity of {\bf LPP} for extremal Betti numbers in Theorem \ref{extremalLPP} and we show in Theorem \ref{inc-region} an inclusion between the  region of the Betti table  outlined by the extremal Betti numbers of an ideal and the one of its corresponding Lex-Plus-Power ideal.

\section{ {$Z$}-stability}

\subsection{Notation}
Let $\NN$ be the set of non-negative integers, $A=K[X_1,\ldots,X_n]$ be a polynomial ring over a field $K$ and $R=\oplus_{j\in\NN}R_j=A/\aa$ be a standard graded algebra. We consider the polynomial ring $R[Z]$  with the standard grading.  When $I$ is an ideal of $R[Z]$, we will denote by $\ov{I}$ its image in  $R$ under the substitution map $Z\mapsto 0$. With $\Hilb\left(M\right)$ we denote the Hilbert series of a module $M$ which is graded with respect to total degree and with $\Hilb\left(M\right)_j$ its $j^{\rm th}$ coefficient, i.e. the $j^{\rm th}$ value of its Hilbert function. Accordingly, $\Hilb\left(M\right)_\bullet$ will denote the Hilbert function of $M$.  We say that an ideal $I$ of $R[Z]$ is \emph{$Z$-graded} if it can be written as $\bigoplus_{h\in \mathbb N} I_{\langle h\rangle}Z^h$, where each $I_{\langle h\rangle}$ is a homogeneous ideal of $R.$ In particular a $Z$-graded ideal of $R[Z]$ is homogeneous. We will denote with $\mm_R$ and $\mm_{R[Z]}$ the homogeneous maximal ideals of $R$ and $R[Z]$ respectively.

\subsection{$Z$-stability}
 The following definition was introduced  in \cite{CaKu}.
\begin{definition}\label{zstab} Let $I= \bigoplus_{h\in \mathbb N} I_{\langle h\rangle}Z^h$ be a $Z$-graded ideal of $R[Z]$. We say that $I$ is \emph{$Z$-stable} if, for all $k\geq 0$, we have 
$I_{\langle k+1\rangle} \mm_R \subseteq I_{\langle k\rangle}.$
 \end{definition}
 
\noindent
 The simplest example of a $Z$-stable ideal is the extension to $R[Z]$ of an ideal $J$ of $R$, in which case $JR[Z]=\bigoplus_{h\in \mathbb N} J Z^h$. It is easy to see that $Z$-stable ideals are fixed under the action on $R[Z]$ of those coordinates changes of $K[X_1,\dots,X_n,Z]$ which are both homogeneous and $R$-linear.
 

\begin{remark}\label{ZZm} 
(a) Let $I$ be a $Z$-stable ideal of $R[Z]$ and let us write the degree $d$ component $I_d$ of $I$ as a direct sum of vector spaces $V_d \oplus V_{d-1}Z \oplus \cdots \oplus V_{0}Z^d$. It follows directly from Definition \ref{zstab} that, for all $j=0,\dots,d$, the $j^{\rm th}$ component of the $R$-ideal generated by $\bigoplus_{j=0}^d V_j$  is the vector space $V_j$.\\
(b) It is immediately seen that the ideal $I:Z= \bigoplus_{h\in\NN} I_{\langle h+1\rangle}Z^h$ is $Z$-stable as well. Furthermore, we observe that $I:Z= I: \mm_{R[Z]}$; one inclusion is clear and the other follows from
$$\mm_{R[Z]}(I:Z)=\bigoplus_{h\in\NN}I_{\langle h+1\rangle}(\mm_{R}+Z)Z^h
\subseteq \bigoplus_{h\in\NN}I_{\langle h\rangle}Z^h\ + \bigoplus_{h\in\NN}I_{\langle h+1\rangle}Z^{h+1}
\subseteq I.$$ 
In particular, $ I\sat:= \bigcup_i (I: \mm_{R[Z]}^i)= \bigcup_i (I: Z^i)  =:I:Z^{\infty}$ is a $Z$-stable ideal.
\end{remark}

\noindent
The next result about $Z$-stable ideals  will be used later in the paper.

\begin{lemma}\label{Zmodulo}
Let $I$ and $J$ be $Z$-stable ideals of $R[Z]$ with $\Hilb\left(I\right)_i=\Hilb\left(J\right)_i$ for all $i\gg 0$.
Then, $\Hilb\left(\ov{I}\right)_j=\Hilb\left(\ov{J}\right)_j$ for all $j\gg 0$.
\end{lemma}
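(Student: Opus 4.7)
The strategy is to reduce to the case where both $I$ and $J$ are $\mm_{R[Z]}$-saturated, observe that a saturated $Z$-stable ideal is necessarily extended from $R$, and conclude by a partial-sum comparison of Hilbert series.

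First, I would exploit Remark \ref{ZZm}(b): the saturation $I\sat = I:Z^\infty$ is again $Z$-stable, and $I\sat/I \cong H^0_{\mm_{R[Z]}}(R[Z]/I)$ is of finite length, so $\Hilb\left(I\right)_i = \Hilb\left(I\sat\right)_i$ for $i \gg 0$, and similarly for $J$. Thus the hypothesis transfers to the pair $(I\sat, J\sat)$. It will suffice to prove the conclusion for this pair, provided we can also control the discrepancy between $\ov{I}$ and $\ov{I\sat}$ (resp.\ $\ov{J}$ and $\ov{J\sat}$) in high degrees; this is deferred to the last step.

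For a $Z$-stable ideal satisfying $I:Z = I$, the identity $I:Z = \bigoplus_h I_{\langle h+1\rangle} Z^h$ from Remark \ref{ZZm}(b) forces $I_{\langle h\rangle} = I_{\langle 0\rangle} = \ov{I}$ for every $h \geq 0$. Hence $I = \ov{I}\cdot R[Z]$, and the degree $d$ component factors as $I_d = \bigoplus_{h=0}^d (\ov{I})_{d-h} Z^h$, so
$$\Hilb\left(I\right)_d \;=\; \sum_{j=0}^d \Hilb\left(\ov{I}\right)_j,$$
with the analogous formula for $J$. Subtracting, the hypothesis $\Hilb\left(I\sat\right)_d = \Hilb\left(J\sat\right)_d$ for $d \gg 0$ yields the equality of partial sums, hence of the summands: $\Hilb\left(\ov{I\sat}\right)_d = \Hilb\left(\ov{J\sat}\right)_d$ for $d \gg 0$.

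Finally, to compare $\ov{I}$ with $\ov{I\sat}$, unraveling the colon gives $\ov{I\sat} = (I\sat)_{\langle 0\rangle} = \bigcup_k I_{\langle k\rangle}$, while iterated $Z$-stability yields $\mm_R^k\, I_{\langle k\rangle} \subseteq I_{\langle 0\rangle}$ for all $k$, so
$$\ov{I} \;\subseteq\; \ov{I\sat} \;\subseteq\; \bigl(\ov{I}:\mm_R^\infty\bigr)$$
as ideals of $R$. Since $(\ov{I}:\mm_R^\infty)/\ov{I} = H^0_{\mm_R}(R/\ov{I})$ has finite length, all three ideals share a common Hilbert function in high degrees, and the same for $J$. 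Combining with the previous step gives $\Hilb\left(\ov{I}\right)_j = \Hilb\left(\ov{J}\right)_j$ for $j \gg 0$, as required. The main obstacle is precisely this last bookkeeping: one must verify that the $R[Z]$-saturation interacts with the $Z \mapsto 0$ restriction through the ordinary $\mm_R$-saturation, so that no information in high degrees is lost when we replace $I$ and $J$ by their saturations; everything else reduces to the defining inclusion $I_{\langle h+1\rangle}\mm_R \subseteq I_{\langle h\rangle}$ and the standard vanishing of $H^0_{\mm}$ in high degrees.
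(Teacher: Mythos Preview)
Your argument is correct, but it takes a different route from the paper's.

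The paper proceeds directly: for $i$ beyond the generating degrees of $I$, it decomposes $I_{i+1}$ as $\ov{I}_{i+1}\oplus I_iZ$ (using $Z$-stability to see that $I_i\cdot(\mm_{R[Z]})_1\subseteq V_i(\mm_R)_1\oplus I_iZ$), obtaining the identity $\Hilb(\ov{I})_{i+1}=\Hilb(I)_{i+1}-\Hilb(I)_i$, and likewise for $J$; the conclusion is then immediate from the hypothesis. This is a two-line difference argument.

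You instead pass to the saturation, exploit the structural fact that a $Z$-saturated $Z$-stable ideal is extended from $R$ (so that $\Hilb(I\sat)_d$ is the $d$th partial sum of $\Hilb(\ov{I\sat})$), recover $\Hilb(\ov{I\sat})$ by differencing, and then return to $\ov{I}$ via the sandwich $\ov{I}\subseteq\ov{I\sat}\subseteq\ov{I}:\mm_R^\infty$. This is longer, but it isolates a clean structural statement (saturated $Z$-stable ideals are extended) and, incidentally, your final step gives an independent proof of the identity $\ov{I\sat}\sat=\ov{I}\sat$ that the paper records later as \eqref{filanoesaturano}. The paper's argument is more economical for the lemma at hand; yours makes the underlying reason more transparent.
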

\begin{proof}
Since $i\gg 0$, we may assume that there is no generator of $I$ or $J$ in degree $i$ and above.
As in Remark \ref{ZZm} (a), we denote the vector space of all homogeneous polynomials in $I$ of degree $i$ by $I_i$, and we write it as direct sum of vector spaces $V_i\oplus V_{i-1}Z\oplus\cdots\oplus V_0Z^i.$ From the definition of $Z$-stability it follows that $I_i(\mm_{R[Z]})_1\subseteq V_i(\mm_R)_1\oplus I_iZ$. Hence, we get a decomposition of the vector space $I_{i+1}$ as direct sum $\ov{I}_i(\mm_R)_1\oplus I_{i}Z=\ov{I}_{i+1}\oplus I_{i}Z.$ Similarly, we can write $J_{i+1}$ as $\ov{J}_{i+1}\oplus J_{i}Z$ and now the conclusion follows easily from the hypothesis.  
\end{proof}

\subsection{Distractions} Let $I$ be a $Z$-graded ideal of $R[Z]$ and let $l=\ov{l}+Z$ be an element of $R[Z]$ with $\bar{l}\in R_1$ (and possibly zero). Given a positive degree $d$ we define the \emph{$(d,l)$-distraction} of $I$, and we denote it by $D_{(d,l)}(I)$,
$$D_{(d,l)}(I):=\bigoplus_{0\leq h<d } I_{\langle h\rangle}Z^h \oplus l\left(\bigoplus_{d\leq h} I_{\langle h\rangle}Z^{h-1}\right).$$
\noindent
It is not hard to see that $D_{(d,l)}(I)$ is an ideal of $R[Z]$ with the same Hilbert function as $I$, see \cite{CaKu} Lemma 3.16 and also \cite{BiCoRo} for more information about general distractions of the polynomial ring $A$. To our purposes, it is important to notice that $D_{(d,l)}(I)$ can be realized in two steps as a polarization of $I$  followed by a specialization. In order to do so, we first define $J$ to be the ideal of $R[Z,T]$ generated by $\bigoplus_{0\leq h<d } I_{\langle h\rangle}Z^h \oplus T\left(\bigoplus_{d\leq h} I_{\langle h\rangle}Z^{h-1}\right).$ Notice that the Hilbert function of $J$ is the same as the one of $IR[Z,T]$. This implies that  $T-Z$ and $T-l$ are $R[Z,T]/J$-regular, and thus there exist isomorphisms 
\begin{equation}\label{dis}
R[Z,T]/(J+ (T-Z)) \simeq R[Z]/I,\;\;\;\;\;\; R[Z,T]/(J+ (T-l)) \simeq R[Z]/D_{(d,l)}(I). 
\end{equation}
  
We now define a partial order $\prec$  on all the $Z$-graded ideals of $R[Z]$ by letting 
\begin{equation}\label{order}
J\preceq L \hbox{\;\;\; iff\;\;\; } 
\Hilb\left(\bigoplus_{k\leq h}J_{\langle k \rangle} Z^k\right)\;\leq\; \Hilb\left(\bigoplus_{k\leq h} L_{\langle k \rangle }Z^k\right) \hbox {\;\;\;\;\;for all } h,
\end{equation}
where $\bigoplus_{k\leq h}J_{\langle k \rangle} Z^k$ and $\bigoplus_{k\leq h} L_{\langle k \rangle }Z^k$ are considered as graded $R$-modules with $\deg Z^k=k$. 
 We write $J\prec L$ when $J\preceq L$ and at least one of the above inequalities  is strict.
Let now $I$ be a  $Z$-graded ideal of $R[Z]$ and consider the partially ordered set  $\mathcal{I}$ of all $Z$-graded ideals of $R[Z]=A/\aa[Z]$ with the same Hilbert function as $I$. We claim that $\mathcal{I}$ has finite dimension as a poset, i.e. the supremum of all lengths of  chains in $\mathcal{I}$ is finite. To this end, we fix a monomial order $\tau$ on $A$ and we compute the initial ideal with respect to $\tau$ of the pre-image in $A[Z]$ of every ideal in $\mathcal I$.  In this way, we have constructed a set $\mathcal{J}$ of monomial ideals in $A[Z]$  all with the same Hilbert function, say $H$. By Macaulay Theorem, this set is finite, for the degrees of the minimal generators of an ideal in $\mathcal J$ are bounded above by the degrees of the minimal generators of the unique lex-segment ideal with Hilbert function $H$. Since every chain of $\mathcal I$ lifts to a chain in $\mathcal{J}$, our claim is now clear.

\begin{proposition}\label{reduction}
Let $I$ be a $Z$-graded ideal of $R[Z]$ and let $\omega=(1,\dots,1,0)$ be a weight vector. If $I$ is not $Z$-stable, then there exist a positive integer $d$ and a linear form $l=\ov{l}+Z$  with $\bar{l}\in R_1$ such that $I\prec \ini_\omega(D_{(d,l)}(I))$. 
\end{proposition}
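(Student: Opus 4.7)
The plan is to produce an explicit $Z$-graded ideal $D$ contained in $\ini_\omega(D_{(d,l)}(I))$ that already dominates $I$ in the order $\preceq$, with strict inequality in at least one truncation, and then invoke monotonicity of $\preceq$ under inclusion.

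First I would isolate a single index at which $Z$-stability fails. Since the chain $I_{\langle 0\rangle}\subseteq I_{\langle 1\rangle}\subseteq\cdots$ stabilizes by Noetherianity, the set of $k$ with $I_{\langle k+1\rangle}\mm_R\not\subseteq I_{\langle k\rangle}$ is finite; let $k_0$ be its maximum, put $d=k_0+1$, and pick $\bar l\in R_1$ with $\bar l\,I_{\langle d\rangle}\not\subseteq I_{\langle d-1\rangle}$. Set $l=\bar l+Z$ and define $D$ to be the $R[Z]$-ideal generated by $\{aZ^h:a\in I_{\langle h\rangle},\,h<d\}\cup\{\bar l\,aZ^{h-1}:a\in I_{\langle h\rangle},\,h\geq d\}$. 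The key observation making this an inclusion into the initial ideal is that for any total-degree homogeneous $f=\sum_k f_kZ^k$, the $\omega$-initial form $\ini_\omega(f)$ equals the lowest-$Z$-degree nonzero summand $f_{k^*}Z^{k^*}$; applied to the two kinds of generators of $D_{(d,l)}(I)$, this yields exactly the generators of $D$. Hence $D\subseteq \ini_\omega(D_{(d,l)}(I))$, and both ideals are $Z$-graded. A direct computation using the chain of the $I_{\langle k\rangle}$ gives $D_{\langle k\rangle}=I_{\langle k\rangle}$ for $k<d-1$ and $D_{\langle k\rangle}=I_{\langle d-1\rangle}+\bar l\,I_{\langle k+1\rangle}$ for $k\geq d-1$. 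By the maximality of $k_0$, $Z$-stability holds at every $k\geq d$, so $\bar l\,I_{\langle k+1\rangle}\subseteq\mm_R I_{\langle k+1\rangle}\subseteq I_{\langle k\rangle}$, whence $D_{\langle k\rangle}\subseteq I_{\langle k\rangle}$ in that range, while the choice of $\bar l$ gives the proper inclusion $I_{\langle d-1\rangle}\subsetneq D_{\langle d-1\rangle}$.

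The main obstacle is the Hilbert-function equality $\Hilb(D)=\Hilb(I)$, equivalently the identification $\ini_\omega(D_{(d,l)}(I))=D$. I would establish this by a Gröbner/flat-family degeneration: the ideals $\tilde J_s\subset R[Z]$ generated by $\{aZ^h:h<d\}\cup\{(\bar l+sZ)\,aZ^{h-1}:h\geq d\}$ form a one-parameter family with $\tilde J_1=D_{(d,l)}(I)$ and $\tilde J_0=D$, and for every $s\in K^*$, after rescaling, $\tilde J_s$ is the distraction $D_{(d,\bar l/s+Z)}(I)$; so $\Hilb(\tilde J_s)=\Hilb(I)$ by \cite{CaKu} Lemma~3.16, already invoked in the excerpt. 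Since the $\omega$-initial forms of the displayed generators coincide with the generators of $D$ for every $s$, standard flat-family reasoning forces $\ini_\omega(\tilde J_s)=D$ for all $s$, in particular at $s=1$. Once $\Hilb(D)=\Hilb(I)$ is in hand, verifying $I\preceq D$ reduces to a termwise tail estimate: for $h\geq d-1$ and every degree $D'$,
\[
\sum_{k>h}\bigl(\dim(I_{\langle k\rangle})_{D'-k}-\dim(D_{\langle k\rangle})_{D'-k}\bigr)\geq 0,
\]
with each summand non-negative because $D_{\langle k\rangle}\subseteq I_{\langle k\rangle}$ for $k\geq d$, while for $h<d-1$ the two truncations are literally equal. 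Strictness comes from $I_{\langle d-1\rangle}\subsetneq D_{\langle d-1\rangle}$ at $h=d-1$, and combining with $D\preceq \ini_\omega(D_{(d,l)}(I))$ gives $I\prec\ini_\omega(D_{(d,l)}(I))$.
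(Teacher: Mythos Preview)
Your argument has a genuine gap at exactly the point you flagged as ``the main obstacle'': the identification $D=\ini_\omega(D_{(d,l)}(I))$, equivalently $\Hilb(D)=\Hilb(I)$, is \emph{false} in general, and the flat-family reasoning you invoke is circular. The family $\tilde J_s$ you write down is flat over $K[s]$ precisely when its Hilbert function is constant in $s$; but constancy at $s=0$ is the equality $\Hilb(D)=\Hilb(I)$ you are trying to establish. Semicontinuity only gives $\Hilb(D)\leq\Hilb(I)$, and the fact that the $\omega$-initial forms of your chosen generators of $D_{(d,l)}(I)$ generate $D$ says nothing more than $D\subseteq\ini_\omega(D_{(d,l)}(I))$; equality would require those generators to be a Gr\"obner basis for $\omega$, which they need not be. Concretely, take $R=K[x]$ and $I=(x^3)\oplus (x)Z\oplus (x)Z^2\oplus R Z^3\oplus RZ^4\oplus\cdots$, i.e.\ the $Z$-graded ideal $(x^3,\,xZ,\,Z^3)$. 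Here $Z$-stability fails only at $k=0$, so your $k_0=0$, $d=1$, $\bar l=x$. One computes $D_{(1,l)}(I)=(x^3,\,x^2+xZ,\,xZ^2+Z^3)$ and $J:=\ini_\omega(D_{(1,l)}(I))=(x^2,\,xZ^2,\,Z^3)$, whereas your $D=(x^2,\,xZ^2)$: so $D\subsetneq J$ and $\Hilb(R[Z]/D)=1+2t+2t^2+t^3+t^4+\cdots\neq 1+2t+2t^2=\Hilb(R[Z]/I)$. Your tail estimate for $I\preceq D$ then collapses, since it uses $\Hilb(D)=\Hilb(I)$ to convert the truncation inequality into a tail sum.

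The paper avoids this entirely: it never compares $I$ with the ideal generated by initial forms, but argues $I\preceq J$ directly. The key observation is that the distraction, viewed as the $K$-linear endomorphism of $R[Z]$ sending $aZ^h\mapsto aZ^h$ for $h<d$ and $aZ^h\mapsto l\,aZ^{h-1}$ for $h\geq d$, carries the $R$-submodule $M_i=\bigoplus_{k\leq i}RZ^k$ into itself and is injective in each total degree; taking $\ini_\omega$ again keeps elements inside $M_i$. Hence $\Hilb(I\cap M_i)\leq\Hilb(D_{(d,l)}(I)\cap M_i)\leq\Hilb(J\cap M_i)$ for every $i$, which is exactly $I\preceq J$. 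Strictness is then read off from $I_{\langle d-1\rangle}\subsetneq J_{\langle d-1\rangle}$ together with $I_{\langle k\rangle}=J_{\langle k\rangle}$ for $k<d-1$. Your computation of $D_{\langle d-1\rangle}$ already gives $J_{\langle d-1\rangle}\supseteq D_{\langle d-1\rangle}\supsetneq I_{\langle d-1\rangle}$, so that step survives; what is missing in your outline is a non-circular reason for the truncation inequalities at $h\geq d-1$, and the paper's filtration argument supplies it without ever needing $\Hilb(D)=\Hilb(I)$.
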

\begin{proof} We write $I$ as $\bigoplus_{h\in \mathbb N} I_{\langle h\rangle}Z^h$ and we let $d>0$ be the least positive integer such that $I_{\langle d\rangle} \mm_R \not \subseteq I_{\langle d-1\rangle}$. Thus,  there exists an  
indeterminate $X_j$ of $A$ such that $I_{\langle d \rangle}X_j \not \subseteq I_{\langle d-1 \rangle}$; therefore  we can define $l$ to be the image in $R[Z]$ of $X_j+Z$ and $D_{(d,l)}$ to be the corresponding $(d,l)$-distraction. Since $D_{(d,l)}(\bigoplus_{j \leq i}R Z^j)\subseteq \bigoplus_{j \leq i}R Z^j $ for all $i$, we have  $I\preceq \ini_\omega(D_{(d,l)}(I))=:J$. Furthermore, $I_{\langle d-1\rangle} \subseteq J_{\langle d-1\rangle}$ and $I_{\langle d\rangle}X_j \subseteq J_{\langle d-1\rangle}.$
Since $ I_{\langle d \rangle} X_j\not \subseteq I_{\langle d-1 \rangle}$  we deduce that 
$I_{\langle d-1\rangle} \subsetneq J_{\langle d-1\rangle}$, hence  $I\prec J$ as desired.
\end{proof}

From now on, $H^i_\mm(\bullet)$ will denote the $i^{\rm th}$ local cohomology with support in the homogeneous maximal ideal $\mm$ of a standard graded $K$-algebra.

\begin{proposition}\label{rred}  Let $I$ be a homogeneous ideal of $R[Z]$. Then, there exists a $Z$-stable ideal $J$ of $R[Z]$ with the same Hilbert function as $I$ such that   
$$\Hilb\left(H^i_{\mm_{R[Z]}}(R[Z]/I)\right)\leq \Hilb\left(H^i_{\mm_{R[Z]}}(R[Z]/J)\right), \hbox{\;\;\; for all } i.$$ 
\end{proposition}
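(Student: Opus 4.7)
\noindent The plan is a Noetherian induction on the partial order $\prec$ from (\ref{order}), restricted to $Z$-graded ideals with a prescribed Hilbert function; as noted just before Proposition \ref{reduction}, this poset has finite dimension. Starting from an arbitrary homogeneous $I$, one first reduces to the $Z$-graded case by passing to $\ini_\omega(I)$ with respect to the weight vector $\omega=(1,\dots,1,0)$. For a homogeneous $f=\sum_k f_kZ^k\in R[Z]$ with $f_k\in R$, the $\omega$-initial form is $\ini_\omega(f)=f_{k_0}Z^{k_0}$, where $k_0$ is the smallest index with $f_{k_0}\ne 0$. Hence $\ini_\omega(I)$ is generated by elements of the form $rZ^k$ with $r\in R$, and is therefore $Z$-graded. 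The passage $I\mapsto\ini_\omega(I)$ is a flat degeneration, so Hilbert functions are preserved; by the semicontinuity of local cohomology Hilbert functions in such families, as established in \cite{Sb},
$$\Hilb\left(H^i_{\mm_{R[Z]}}(R[Z]/I)\right)\le\Hilb\left(H^i_{\mm_{R[Z]}}(R[Z]/\ini_\omega(I))\right)\text{ for all }i,$$
so one may replace $I$ with $\ini_\omega(I)$ and henceforth assume that $I$ is $Z$-graded.

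Now iterate the following step. If $I$ is $Z$-stable, set $J:=I$ and stop. Otherwise, Proposition \ref{reduction} supplies $d>0$ and a linear form $l=\ov l+Z$ such that $I\prec I':=\ini_\omega(D_{(d,l)}(I))$, with $I'$ having the same Hilbert function as $I$. Using the isomorphisms (\ref{dis}), the quotients $R[Z]/I$ and $R[Z]/D_{(d,l)}(I)$ are realised as quotients of $N:=R[Z,T]/J$ by the regular elements $T-Z$ and $T-l$, which sit in the pencil $\{T-Z-t\ov l\}_{t\in K}$. This exhibits the step $I\to I'$ as a composition of two flat degenerations: first the deformation of the regular element cutting $N$ from $T-Z$ to $T-l$, then the initial-ideal operation $\ini_\omega$. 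Applying \cite{Sb} to each degeneration produces
$$\Hilb\left(H^i_{\mm_{R[Z]}}(R[Z]/I)\right)\le\Hilb\left(H^i_{\mm_{R[Z]}}(R[Z]/I')\right)\text{ for all }i.$$
Since $I'\succ I$ in the finite-dimensional poset of $Z$-graded ideals with the prescribed Hilbert function, the procedure terminates after finitely many steps at a $Z$-stable ideal $J$; chaining the inequalities yields the conclusion.

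The hard part will be the key inequality in the inductive step: showing that the distraction $I\to D_{(d,l)}(I)$ on its own does not decrease the Hilbert functions of local cohomology. The polarization $J$ in $R[Z,T]$, together with the regularity of both $T-Z$ and $T-l$ on $R[Z,T]/J$, is essential, but semicontinuity in the one-parameter family $\{T-Z-t\ov l\}_{t\in K}$ must be invoked with care to obtain the inequality in the correct direction. The subsequent $\ini_\omega$-step is then handled by the same semicontinuity argument used in the preliminary reduction.
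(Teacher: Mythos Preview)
Your overall architecture matches the paper's proof: reduce to the $Z$-graded case via $\ini_\omega$ and \cite{Sb}, then climb in the finite-dimensional poset $(\mathcal I,\prec)$ using Proposition \ref{reduction} until you reach a $Z$-stable ideal. The paper phrases this as ``pick $J$ maximal among those satisfying the inequality and show it must be $Z$-stable'', while you iterate from below; these are equivalent.

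The gap is in your treatment of the distraction step. You describe the passage $I\rightsquigarrow D_{(d,l)}(I)$ as one half of a ``composition of two flat degenerations'', invoking semicontinuity along the pencil $\{T-Z-t\ov l\}_{t\in K}$ to get the desired inequality. But this pencil is \emph{not} a degeneration in the relevant sense: every member $T-Z-t\ov l$ is regular on $N=R[Z,T]/J$ (the same Hilbert-function argument that shows $T-Z$ and $T-l$ are regular works for all $t$), so no fibre is distinguished as special. Upper semicontinuity then only tells you that both $\dim_K H^i(M_0)_j$ and $\dim_K H^i(M_1)_j$ dominate the generic value; it gives no comparison between the two endpoints. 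Your own closing caveat (``must be invoked with care to obtain the inequality in the correct direction'') is exactly this obstruction, and it is not overcome.

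The paper avoids the issue entirely by observing that one actually has \emph{equality}
\[
\Hilb\left(H^i_{\mm_{R[Z]}}(R[Z]/I)\right)=\Hilb\left(H^i_{\mm_{R[Z]}}(R[Z]/D_{(d,l)}(I))\right)
\]
for all $i$. The reason is that, via \eqref{dis}, both quotients arise from the \emph{same} module $N=R[Z,T]/J$ by factoring out a linear $N$-regular element; this is precisely the polarization--specialization situation treated in \cite{Sb}, Section~5, where it is shown that the Hilbert functions of the local cohomology modules of such a specialization are determined by $N$ alone and hence do not depend on which regular linear form is used. With this equality in hand, the subsequent $\ini_\omega$-step (which \emph{is} a genuine Gr\"obner degeneration) and \cite{Sb}, Theorem~2.4 give the required inequality and the induction proceeds. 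Replace your pencil argument by this equality and the proof is complete.
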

\begin{proof} 
By \cite{Sb} Theorem 2.4,  
$\Hilb\left(H^i_{\mm_{R[Z]}}(R[Z]/I)\right)\leq \Hilb\left(H^i_{\mm_{R[Z]}}(R[Z]/\ini_\omega(I))\right)$ where $\omega=(1,\dots,1,0)$, and thus without loss of generality we may assume that $I$ is $Z$-graded. Let now $\mathcal I$ be the set of all $Z$-graded ideals of $R[Z]$ with same Hibert function as $I$ and $J\in \mathcal{I}$ be maximal - with respect to the partial order $\prec$ defined in \eqref{order} - among the ideals of $\mathcal{I}$ which  satisfy $\Hilb\left(H^i_{\mm_{R[Z]}}(R[Z]/I)\right)\leq \Hilb\left(H^i_{\mm_{R[Z]}}(R[Z]/J)\right)$ for all $i$. We claim that $J$ is $Z$-stable. If it were not, by Proposition \ref{reduction}  there would  exist a positive integer $d$ and a linear form $l$ such that $J\prec \ini_\omega(D_{(d,l)}(J))$. By \eqref{dis} together with \cite{Sb} Section 5, we have  
$\Hilb\left(H^i_{\mm_{R[Z]}}(R[Z]/J)\right)=\Hilb\left(H^i_{\mm_{R[Z]}}(R[Z]/D_{(d,l)}(J))\right)$. By \cite{Sb} Theorem 2.4, $\ini_\omega(D_{(d,l)}(J))\in\mathcal{I}$, contradicting the maximality of $J$.
\end{proof}

 \section{Embeddings and the General Hyperplane Restriction Theorem}
Let $B=K[X_1,\dots,X_n]$ be a standard graded polynomial ring over a field $K$,  $\bb$ a homogeneous ideal of $B$ and $S=B/\bb.$
Denote by $\iP_{S}$  the poset $\{J \: J \ \text{is a homogeneous $S$-ideal}\}$ ordered by inclusion, and with $\hP_{S}$  the poset
$\{\Hilb\left(J\right)_\bullet \: J \in \iP_{S}\}$ of all Hilbert functions of the ideals in $\iP_{S}$ with  the usual point-wise partial order. Following \cite{CaKu} we say that $S$  \emph{admits an embedding} if there exists an order preserving injection $\epsilon\:\hP_{S} \longrightarrow \iP_{S}$ such that the image of any given Hilbert function is an ideal with that Hilbert function. We call any such $\epsilon$ an \emph{embedding of $S$}. A ring $S$ with a specified embedding $\epsilon $ is denoted by $(S,\epsilon)$ and, for simplicity's sake, we also let $\epsilon(I):=\epsilon(\Hilb\left(I\right)_\bullet)$, for every $I\in \iP_{S}$. The notion of  embedding  captures the key property of rings for which an analogous of Macaulay Theorem  holds. 

\begin{example}[Three standard examples of embedding]\label{thhree} In the following we present some results that
can be re-interpreted with the above terminology.\\
(a) Let $S=B$ and define $\epsilon\: \mathcal{H}_S\longrightarrow \mathcal{I}_S$ as $\epsilon(\Hilb\left(I\right)_\bullet):=L$, 
where $L$ is the unique lex-segment ideal with Hilbert function $\Hilb\left(I\right)_\bullet$. The fact that this map is well-defined is just a restatement of Macaulay Theorem.\\ 
(b) Let $S=B/\bb$, where $\bb=(X_1^{d_1},\ldots,X_r^{d_r})$ and $d_1\leq\cdots \leq d_r$. Let $I$ be a homogeneous ideal of $S$, by Clements-Lindstr\"om Theorem \cite{ClLi} there exists a lex-segment ideal $L\subseteq B$ such that $\Hilb\left(I\right)=\Hilb\left(LS\right)$. Since $LS$ is uniquely determined by $\Hilb\left(I\right)$,  we may define  $\epsilon(\Hilb\left(I\right)_\bullet):=LS$. The ideal $L+\bb$, which is uniquely determined by the Hilbert function of $I$, is often referred to as  \emph{the Lex-Plus-Power ideal associated with $I$ (with respect to $d_1,\dots,d_r$)}.\\
(c) Let $m$ be a positive integer and $S=B^{(m)}=\bigoplus_{d\geq 0}B_{md}$ the $m^{\rm th}$-Veronese subring of $B$. Let $I$ be a homogeneous ideal of $S$. Then, by \cite{GaPeMu}, there exists a lex-segment ideal $L$ of $B$ such that $\Hilb\left(I\right)=\Hilb\left(\bigoplus_{d\geq 0}L_{md}\right)$ and we define $\epsilon(\Hilb\left(I\right)_\bullet):=\bigoplus_{d\geq 0}L_{md}$.\\
We observe that in all of the above examples the image set of $\epsilon$ consists of the classes in $S$ of lex-segment ideals; rings with this property  are called {\em Macaulay-Lex}, cf. for instance \cite{MePe}.

These examples  can be derived by general properties of embeddings proved in \cite{CaKu}. For instance, if we let $(S,\epsilon)$ be a ring with an embedding then:\\
(i) the polynomial ring $S[Z]$ admits an embedding and if $\epsilon$ is defined by means of a lex-segment as in (a) and (b) then so is the extended embedding on $S[Z]$;\\ 
(ii) when $\mathcal{H}_{S[Z]/(Z^d)}= \left \{ \Hilb\left(JS[Z]\right)_\bullet \: J \hbox{\; is }Z\hbox{-stable}\right\}$, the ring $S[Z]/(Z^d)$ admits an embedding as well. By an iterated use of this fact, starting with $S=K$, one can recover (b);\\
(iii) any Veronese subring $S^{(m)}$ of $S$ admits an embedding inherited from $(S,\epsilon)$;\\ 
(iv) $S/\epsilon(I)$ admits an embedding induced by $\epsilon$;\\
(v) when $\bb$ is monomial and $\cc\subseteq T$ is a polarization of $\bb$, then 
$T/ \cc$ admits an embedding.\hfill{\qed}
\end{example}



The following theorem generalizes to rings with embedding \cite{HePo} Theorem 3.7 (see also \cite{Ga} Theorem 2.4) valid for polynomial rings.

\begin{theorem}[General Restriction Theorem]\label{hyperplane}
Let $(S,\epsilon)$ be a standard graded $K$-algebra with an embedding and $S[Z]$ a polynomial ring in one variable with  coefficients in $S$.
There exists an embedding $\epsilon_1\:\mathcal{H}_{S[Z]}\longrightarrow\mathcal{I}_{S[Z]}$ such that $\epsilon_1(I)=\bigoplus_{h}J_{\langle h\rangle}Z^h$  is a $Z$-stable ideal with $\epsilon(J_{\langle h\rangle})=J_{\langle h\rangle}$. Moreover, if $I$ is $Z$-stable, then
\[
\Hilb\left(I+(Z^j)\right)\geq\Hilb\left(\epsilon_1(I)+(Z^j)\right), \hbox{\;\;\; for all } j.
\]
\end{theorem}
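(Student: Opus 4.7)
The strategy is to use Proposition \ref{rred} to find a $Z$-stable representative of each Hilbert function and apply the given embedding $\epsilon$ of $S$ slot-by-slot. Given $h\in\mathcal{H}_{S[Z]}$, pick a $Z$-stable ideal $I=\bigoplus_{k\geq 0}I_{\langle k\rangle}Z^k$ with $\Hilb\left(I\right)_\bullet=h$ and define
\[
\epsilon_1(h):=\bigoplus_{k\geq 0}\epsilon(I_{\langle k\rangle})Z^k.
\]
The slot-wise Hilbert-function-preserving and idempotent properties of $\epsilon$ immediately give $\Hilb\left(\epsilon_1(h)\right)_\bullet=h$ and the required equality $\epsilon(J_{\langle k\rangle})=J_{\langle k\rangle}$ on each slot.

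The first substantive step is to verify that $\epsilon_1(h)$ is actually an $S[Z]$-ideal, equivalently $Z$-stable in the sense of Definition \ref{zstab}: one needs $\epsilon(I_{\langle k+1\rangle})\mm_S\subseteq\epsilon(I_{\langle k\rangle})$ for every $k$. From $Z$-stability of $I$ we only get the Hilbert-function inequality $\Hilb\left(I_{\langle k+1\rangle}\mm_S\right)_\bullet\leq\Hilb\left(I_{\langle k\rangle}\right)_\bullet$, so the inclusion requires a Macaulay-style growth property of $\epsilon$, namely
\[
\epsilon(J)\mm_S\subseteq\epsilon(\Hilb\left(J\mm_S\right)_\bullet)
\]
for every homogeneous ideal $J$ of $S$. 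Combined with the order-preserving property of $\epsilon$, this yields the desired containment; this is the one abstract property of the embedding $\epsilon$ that must be established (or imported from \cite{CaKu}).

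Next I would address well-definedness of $\epsilon_1(h)$ and its order-preservation. For well-definedness, I would use the partial order $\prec$ introduced in \eqref{order} together with the finite-length poset argument preceding Proposition \ref{reduction}: any $\prec$-maximal $Z$-graded ideal with Hilbert function $h$ is automatically $Z$-stable by the contrapositive of Proposition \ref{reduction}, and selecting such a maximal representative pins down a canonical $\epsilon_1(h)$, aided by Lemma \ref{Zmodulo} applied to the colons $I:Z^j$ (which are $Z$-stable by Remark \ref{ZZm}(b)) to identify the slot Hilbert functions. Order-preservation of $\epsilon_1$ then reduces to the slot-wise inequalities $\Hilb\left(I_{\langle k\rangle}\right)_\bullet\leq\Hilb\left(I'_{\langle k\rangle}\right)_\bullet$, which propagate to the $\epsilon$-images by order-preservation of $\epsilon$.

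For the moreover statement, if $I$ is $Z$-stable then
\[
I+(Z^j)=\bigoplus_{k<j}I_{\langle k\rangle}Z^k\oplus\bigoplus_{k\geq j}SZ^k,
\]
and analogously for $\epsilon_1(I)+(Z^j)$. Since $\Hilb\left(\epsilon(I_{\langle k\rangle})\right)_\bullet=\Hilb\left(I_{\langle k\rangle}\right)_\bullet$ for each $k$, the two ideals have identical Hilbert functions in every total degree and the stated inequality holds (in fact as an equality). The main obstacle in the plan is the Macaulay-growth compatibility used in the first step, since this is the only place where the abstract definition of an embedding must genuinely interact with multiplication by $\mm_S$; all other steps are essentially formal once this compatibility is in hand.
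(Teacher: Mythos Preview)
There is a genuine gap in your treatment of the ``moreover'' clause, and it stems from an internal inconsistency in the plan. In your well-definedness step you (correctly) allow for the possibility that two different $Z$-stable ideals of $S[Z]$ share the same total Hilbert function, and you resolve this by passing to a canonical $\prec$-maximal representative. But then, in the last paragraph, you silently assume that for \emph{any} $Z$-stable $I$ the ideal $\epsilon_1(I)$ is obtained by applying $\epsilon$ slot-wise to the slots of that very $I$; this would force the slot Hilbert functions of $I$ and $\epsilon_1(I)$ to coincide and make the inequality an equality. These two assumptions cannot both hold, and in fact the second one is false: take $S=K[X,Y]$ with the lex embedding and $I=(X^2,Y^2)\subset S[Z]=K[X,Y,Z]$, which is $Z$-stable with $I_{\langle h\rangle}=(X^2,Y^2)$ for all $h$. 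The lex ideal of $S[Z]$ with the same Hilbert function is $\epsilon_1(I)=(X^2,XY,XZ^2)$, and one computes $\Hilb(\ov I)_3=4$ while $\Hilb(\ov{\epsilon_1(I)})_3=3$, so $\Hilb(I+(Z))\neq\Hilb(\epsilon_1(I)+(Z))$. Thus the displayed inequality is not formal and is typically strict.

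The ``moreover'' part is precisely the non-trivial content of the theorem: it is the abstract version of Green's hyperplane restriction theorem, and its proof requires an honest comparison between an arbitrary $Z$-stable ideal and the canonical one produced by $\epsilon_1$. The paper does not reprove this here but refers to \cite{CaKu}, Theorem~3.9 (and \cite{CaKu2}, Theorem~2.1), where the construction of $\epsilon_1$ and the restriction inequality are established simultaneously; your slot-wise idea and the Macaulay-growth property you isolate are indeed the starting ingredients there, but the restriction inequality needs an additional inductive argument that you have not supplied.
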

\begin{proof}
See that of \cite{CaKu} Theorem 3.9 (see also \cite{CaKu2} Theorem 2.1).
\end{proof}

\begin{definition}\label{embedda}  Let $(S,\epsilon)$ be a ring with an embedding. We say that a $K$-algebra $R$ {\it embeds into} $(S,\epsilon)$ and we write $(R,S,\epsilon)$ if $\mathcal{H}_R\subseteq\mathcal{H}_S$. Moreover, we say that an ideal
$I$ of $S$ is embedded if it is in the image of $\epsilon$.  For simplicity's sake, we let again $\epsilon(I):=\epsilon(\Hilb\left(I\right)_\bullet)$ for all homogeneous ideals $I$ of $R$.
\end{definition}
\noindent
Clearly, $(S,\epsilon)$ embeds into itself and if $R$ embeds into $(S,\epsilon)$ then $\Hilb\left(R\right)=\Hilb\left(S\right)$, for there is an ideal $I\in\mathcal{I}_S$ with same Hilbert function as $R$, therefore $\Hilb\left(I\right)_0=1$ implies $I=S$.

The above definition is motivated by the conjecture of Eisenbud, Green and Harris, which has been discussed in the introduction. When a regular sequence ${\bf f}$ of $A=K[X_1,\dots,X_n]$ satisfies {\bf EGH}, the Hilbert functions of homogeneous ideals in $R=A/({\bf f})$ are also Hilbert functions of homogeneous ideal in $S=B/(X_1^{d_1},\ldots,X_r^{d_r}).$ Equivalently   $\mathcal{H}_R\subseteq\mathcal{H}_S.$ The fact that Clements-Lindstr\"om Theorem holds for $S$ tells us that $R$ embeds into $S.$ One of the reasons why {\bf EGH} is important is that  $\mathcal{H}_R\subseteq\mathcal{H}_S$ together with the fact that Clements-Lindstr\"om Theorem gives an embedding, allow to transfer certain results, e.g. an  uniform upper bound on the number of generators  as in Remark \ref{gen}), from the ring  $S$ to the ring $R$. 


Let now $(S,\epsilon)$ be a ring with an embedding, and let $I\in\mathcal{I}_S.$ It is easily seen from \cite{CaKu} Proposition 2.4 and Definition 2.3 (i) that the ideal $\epsilon(I)\mm_S$ is embedded i.e. $\epsilon(\epsilon(I)\mm_S)=\epsilon(I)\mm_S$.

\begin{lemma}\label{isthisthelast} 
Let $R$ embed into $(S,\epsilon)$ and let $I\in\mathcal{I}_R$. Then, $\mm_S\epsilon(I)\subseteq \epsilon(\mm_RI)$. 
\end{lemma}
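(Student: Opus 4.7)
The plan is to reduce the claimed inclusion of ideals of $S$ to a pointwise comparison of Hilbert functions between two embedded ideals.

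First, by the observation recalled immediately before the lemma, $\mm_S\epsilon(I)$ is embedded, i.e., $\epsilon(\mm_S\epsilon(I))=\mm_S\epsilon(I)$; and $\epsilon(\mm_R I)$ is embedded by definition. Since $\epsilon$ is an order-preserving injection from $\hP_S$ into $\iP_S$, and since inclusion of homogeneous ideals always implies pointwise inequality of their Hilbert functions, $\epsilon$ restricts to an order isomorphism onto the set of embedded ideals of $S$. Consequently, for any two embedded ideals $J_1,J_2\in\iP_S$,
\[
J_1\subseteq J_2 \quad\Longleftrightarrow\quad \Hilb(J_1)_\bullet \;\le\; \Hilb(J_2)_\bullet,
\]
so it suffices to prove $\Hilb(\mm_S\epsilon(I))_\bullet\le \Hilb(\mm_R I)_\bullet$.

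Next, since both rings are standard graded, for any homogeneous ideal $K$ in a standard graded algebra $T$ we have $(\mm_T K)_j = T_1\cdot K_{j-1}$, so the short exact sequence $0\to \mm_T K\to K\to K/\mm_T K\to 0$ yields $\Hilb(\mm_T K)_j = \Hilb(K)_j - \mu_j(K)$, where $\mu_j(K)$ is the number of minimal generators of $K$ in degree $j$. Combined with $\Hilb(R)=\Hilb(S)$ (noted after Definition \ref{embedda}) and $\Hilb(I)=\Hilb(\epsilon(I))$, the target Hilbert function inequality rewrites as $\mu_j(\epsilon(I))\ge \mu_j(I)$ for every $j$: the embedded ideal $\epsilon(I)$ must have at least as many minimal generators in each degree as $I$.

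The main obstacle is establishing this Bigatti--Hulett--Pardue-style extremality of the embedding in the abstract setting. In the polynomial-ring/lex case of Example \ref{thhree}(a) it is the classical result of \cite{Bi}, \cite{Hu}, and \cite{Pa}; in the present generality it is encoded in the definition of embedding, as in \cite{CaKu} Proposition 2.4 and Definition 2.3(i). Indeed, the $\mm_S$-closure of the class of embedded ideals recalled just before the lemma is the structural ingredient ensuring that $\epsilon(I)$ maximizes the minimal-generator count among ideals with its Hilbert function: applying order-preservation to the containment $\mm_R I\subseteq I$ in $\hP_R\subseteq\hP_S$ produces $\epsilon(\mm_R I)\subseteq\epsilon(I)$, while the closure under $\mm_S$-multiplication places $\mm_S\epsilon(I)$ as an embedded subideal of $\epsilon(I)$; the extremality of embedded ideals then forces $\mm_S\epsilon(I)$ to be the smaller one, yielding the Hilbert function inequality and hence the desired inclusion.
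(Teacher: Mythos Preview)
Your reduction is correct and matches the paper: since $\mm_S\epsilon(I)$ is embedded, it suffices to compare Hilbert functions, i.e.\ to show $\dim_K(S_1\epsilon(I))_{d+1}\le\dim_K(R_1 I_d)$ for every $d$. Your reformulation of this as the generator inequality $\mu_j(\epsilon(I))\ge\mu_j(I)$ is also fine.

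The gap is in your final paragraph. You record that both $\mm_S\epsilon(I)$ and $\epsilon(\mm_R I)$ are embedded subideals of $\epsilon(I)$, and then write that ``the extremality of embedded ideals then forces $\mm_S\epsilon(I)$ to be the smaller one.'' No such principle has been established: two embedded subideals of $\epsilon(I)$ are comparable precisely when their Hilbert functions are, and that is exactly what you are trying to prove. Invoking that $\epsilon(I)$ ``maximizes the minimal-generator count among ideals with its Hilbert function'' is circular here---that statement is Remark~\ref{gen}, which the paper derives \emph{from} this lemma, not conversely. Nothing in the axioms for an embedding (Definition~2.3 and Proposition~2.4 of \cite{CaKu}) directly hands you this extremality; the $\mm_S$-closure property you cite only says $\mm_S\epsilon(I)$ is embedded, not that it sits below $\epsilon(\mm_R I)$.

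The missing idea, which the paper supplies, is a truncation trick: work one degree at a time and replace $I$ by the ideal $(I_d)$ generated by $I_d$ alone. Since $\epsilon((I_d))\subseteq\epsilon(I)$ and they agree in degree $d$, one gets $(S_1\epsilon(I))_{d+1}=(S_1\epsilon((I_d)))_{d+1}\subseteq\epsilon((I_d))_{d+1}$; and now $\dim_K\epsilon((I_d))_{d+1}=\dim_K(I_d)_{d+1}=\dim_K R_1 I_d$ simply because $\epsilon$ preserves Hilbert functions. This degree-by-degree replacement is what converts the abstract embedding axioms into the needed inequality; without it, your argument does not close.
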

\begin{proof}
Since $\mm_S\epsilon(I)$ is embedded and embeddings preserve poset structures we only need to show that $\Hilb\left(\mm_S\epsilon(I)\right)_\bullet\leq \Hilb\left(\epsilon(\mm_RI)\right)_\bullet$ or, equivalently, that 
$\dim_K (S_1\epsilon(I))_{d+1}\leq\dim_K R_1I_d$ for all $d\geq 0$. Since $\epsilon(I)$ contains $\epsilon((I_d))$ and  they agree in degree $d$, we have $(S_1\epsilon(I))_{d+1}=(S_1\epsilon((I_d)))_{d+1}$ and its dimension is smaller than or equal to that of $\epsilon((I_d))_{d+1}$. Now it is enough to observe that the latter has the same dimension as $R_1I_d$. 
\end{proof}

\begin{remark} \label{gen} There is point-wise inequality between the number and the degrees of minimal generators of a homogeneous ideal $I\subseteq R$ and the ones of $\epsilon(I),$ namely $\beta_{1j}^{R}(I)\leq  \beta_{1j}^{S}(\epsilon(I)).$ This inequality is equivalent to $\Hilb \left(I/\mm_R I\right) \leq 
\Hilb\left(\epsilon(I)/ \mm_S\epsilon(I)\right),$ which follows immediately from  Lemma \ref{isthisthelast}.
\end{remark}

\begin{proposition} \label{hyperembed}
Let $R$ embed into $(S,\epsilon)$ and let $\epsilon_1$ as in Theorem \ref{hyperplane}. Then $R[Z]$ embeds into $(S[Z],\epsilon_1)$. Moreover, if $I$ is a $Z$-stable ideal of $R[Z]$ then
\[
\Hilb\left(I+(Z^j)\right)\geq\Hilb\left(\epsilon_1(I)+(Z^j)\right), \hbox{\;\;\; for all } j.
\]
\end{proposition}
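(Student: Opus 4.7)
The plan is to prove both assertions simultaneously by a single construction: given a $Z$-graded ideal $I=\bigoplus_h I_{\langle h\rangle}Z^h$ of $R[Z]$, the \emph{companion ideal} $\widetilde I:=\bigoplus_h\epsilon(I_{\langle h\rangle})Z^h$ of $S[Z]$ has the same Hilbert function as $I$, and I can then invoke Theorem~\ref{hyperplane} inside $S[Z]$.

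First I would establish the embedding part, which amounts to showing $\mathcal H_{R[Z]}\subseteq\mathcal H_{S[Z]}$. Given any homogeneous ideal $I$ of $R[Z]$, pass to $\ini_\omega(I)$ with $\omega=(1,\dots,1,0)$ (the Hilbert function is preserved by \cite{Sb} Theorem 2.4) to reduce to the case in which $I$ is $Z$-graded. Being an ideal, $I$ satisfies $I_{\langle h\rangle}\subseteq I_{\langle h+1\rangle}$ for every $h$; order preservation of $\epsilon$ gives $\epsilon(I_{\langle h\rangle})\subseteq\epsilon(I_{\langle h+1\rangle})$, and together with the fact that each $\epsilon(I_{\langle h\rangle})$ is an ideal of $S$ this yields that $\widetilde I$ is a $Z$-graded ideal of $S[Z]$. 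Level-by-level preservation of Hilbert functions by $\epsilon$ then gives $\Hilb(\widetilde I)=\Hilb(I)$, so $\Hilb(I)_\bullet\in\mathcal H_{S[Z]}$, as required.

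For the \emph{moreover} part, assume $I$ is $Z$-stable. The key step is to prove $\widetilde I$ is $Z$-stable as well, so that Theorem~\ref{hyperplane} applies directly to it. This is where Lemma~\ref{isthisthelast} enters: the $Z$-stability of $I$ reads $\mm_R I_{\langle h+1\rangle}\subseteq I_{\langle h\rangle}$, from which
\[
\mm_S\,\epsilon(I_{\langle h+1\rangle})\;\subseteq\;\epsilon(\mm_R I_{\langle h+1\rangle})\;\subseteq\;\epsilon(I_{\langle h\rangle}),
\]
the first inclusion being Lemma~\ref{isthisthelast} and the second order preservation. Hence $\widetilde I$ is $Z$-stable in $S[Z]$ and Theorem~\ref{hyperplane} yields
\[
\Hilb(\widetilde I+(Z^j))\;\geq\;\Hilb(\epsilon_1(\widetilde I)+(Z^j))\quad\text{for all }j.
\]
Since $\Hilb(I)=\Hilb(\widetilde I)$ and $\epsilon_1$ depends only on Hilbert functions, $\epsilon_1(\widetilde I)=\epsilon_1(I)$. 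It remains to observe $\Hilb(I+(Z^j))=\Hilb(\widetilde I+(Z^j))$: in $Z$-degree $h<j$ the components are $I_{\langle h\rangle}$ and $\epsilon(I_{\langle h\rangle})$, which share a Hilbert function by construction; in $Z$-degree $h\geq j$ the components are the full rings $R$ and $S$, which share a Hilbert function because $R$ embeds into $(S,\epsilon)$ forces $\Hilb(R)=\Hilb(S)$.

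I expect the main obstacle to be exactly the inheritance of $Z$-stability by $\widetilde I$: without Lemma~\ref{isthisthelast} one would only obtain $\epsilon(\mm_R I_{\langle h+1\rangle})\subseteq\epsilon(I_{\langle h\rangle})$, whereas what Theorem~\ref{hyperplane} demands on the $S$-side is $\mm_S\,\epsilon(I_{\langle h+1\rangle})\subseteq\epsilon(I_{\langle h\rangle})$. Everything else is a routine consequence of the order-preserving, Hilbert-function-preserving nature of $\epsilon$ and of the straightforward decomposition of $I+(Z^j)$ into $Z$-graded components.
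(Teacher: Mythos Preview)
Your argument is correct and follows essentially the same route as the paper's own proof: construct the companion ideal $\widetilde I=\bigoplus_h\epsilon(I_{\langle h\rangle})Z^h$ in $S[Z]$, verify it is an ideal with the same Hilbert function, use Lemma~\ref{isthisthelast} to check $Z$-stability of $\widetilde I$, and then apply Theorem~\ref{hyperplane} together with $\epsilon_1(\widetilde I)=\epsilon_1(I)$. The only cosmetic difference is that for the embedding part the paper invokes Proposition~\ref{rred} to reduce directly to a $Z$-stable ideal, whereas you reduce only to a $Z$-graded one via $\ini_\omega$; your lighter reduction already suffices there. One small remark: the preservation of Hilbert functions under $\ini_\omega$ is a standard Gr\"obner-degeneration fact and is not the content of \cite{Sb}~Theorem~2.4 (which concerns local cohomology), so you may wish to adjust that citation.
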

\begin{proof} 
  Let $I$ be a homogeneous ideal of $R[Z]$.  By Proposition \ref{rred}  there exists a $Z$-stable ideal of $R[Z]$ with the same Hilbert function as $I$ so that we may assume that $I$ is $Z$-stable. Now we write $I$ as $\bigoplus_{h\in\NN} I_{\langle h \rangle} Z^h$ and we let $J$ be the $S$-module  $\bigoplus_{h\in\NN} \epsilon(I_{\langle h \rangle}) Z^h$. It is easy to see that $J$ is an ideal of $S$, for $\epsilon(I_{\langle 0 \rangle})\subseteq \epsilon(I_{\langle 1 \rangle})\subseteq\cdots$, and thus  $\mathcal{H}_{R[Z]}\subseteq\mathcal{H}_{S[Z]}$.  By Lemma \ref{isthisthelast}, $\mm_S\epsilon(I_{\langle h+1 \rangle})\subseteq\epsilon(I_{\langle h \rangle})$  which implies that $J$ is $Z$-stable. We now have $\Hilb\left(I+(Z^j)\right)=\Hilb\left(J+(Z^j)\right)$ for all $j$ and we can conclude the proof by applying Theorem \ref{hyperplane} since $\epsilon_1(J)=\epsilon_1(I)$. 
\end{proof}





We conclude this section with a technical result we need later on.

\begin{lemma}\label{pregiatolemmaZ2palle}
Let $R$ embed into $(S,\epsilon)$ and let $\epsilon_1$ as in Theorem \ref{hyperplane}. If $I$ is a $Z$-stable ideal of $R[Z]$, then $\epsilon(\ov{I})\sat=\ov{\epsilon_1(I)}\sat$.
\end{lemma}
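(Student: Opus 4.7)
The plan is to show that $\ov{\epsilon_1(I)}$ and $\epsilon(\ov I)$ coincide as $K$-subspaces of $S$ in every sufficiently high degree, from which the equality of their saturations will follow immediately. This reduces to two facts: (a) the containment $\ov{\epsilon_1(I)}\subseteq \epsilon(\ov I)$, and (b) the coincidence of Hilbert functions $\Hilb\left(\ov{\epsilon_1(I)}\right)_d=\Hilb\left(\epsilon(\ov I)\right)_d$ for all $d\gg 0$.

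To obtain (a), I apply Proposition \ref{hyperembed} with $j=1$ to the $Z$-stable ideal $I\subseteq R[Z]$, giving $\Hilb\left(I+(Z)\right)\ge \Hilb\left(\epsilon_1(I)+(Z)\right)$. Using the isomorphisms $R[Z]/(I+(Z))\cong R/\ov I$ and $S[Z]/(\epsilon_1(I)+(Z))\cong S/\ov{\epsilon_1(I)}$, together with the identity $\Hilb\left(R\right)=\Hilb\left(S\right)$ recorded after Definition \ref{embedda}, this translates to $\Hilb\left(\ov I\right)\ge \Hilb\left(\ov{\epsilon_1(I)}\right)$, hence to $\Hilb\left(\epsilon(\ov I)\right)\ge \Hilb\left(\ov{\epsilon_1(I)}\right)$ by the Hilbert-function preserving property of $\epsilon$. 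Since $\epsilon(\ov I)$ and $\ov{\epsilon_1(I)}$ are both embedded ideals of $S$---the latter being a component of the $Z$-stable ideal $\epsilon_1(I)$ by Theorem \ref{hyperplane}---the order-preserving injectivity of $\epsilon$ promotes this numerical inequality to the desired containment.

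For (b), I invoke the auxiliary ideal $J:=\bigoplus_{h\in\NN}\epsilon(I_{\langle h\rangle})Z^h\subseteq S[Z]$ used in the proof of Proposition \ref{hyperembed}: it is $Z$-stable by Lemma \ref{isthisthelast}, it has the same Hilbert function as $I$ and therefore as $\epsilon_1(I)$, and it satisfies $\ov J=\epsilon(\ov I)$. Now $J$ and $\epsilon_1(I)$ are $Z$-stable ideals of the same ring $S[Z]$ sharing a Hilbert function, so Lemma \ref{Zmodulo} yields $\Hilb\left(\ov J\right)_d=\Hilb\left(\ov{\epsilon_1(I)}\right)_d$ for all $d\gg 0$, which is exactly (b).

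Combining (a) and (b), for every sufficiently large $d$ the inclusion $\ov{\epsilon_1(I)}_d\subseteq\epsilon(\ov I)_d$ is an inclusion of $K$-vector spaces of equal dimension, hence an equality. Since two homogeneous ideals of $S$ agreeing in all sufficiently high degrees have equal saturations, this gives $\epsilon(\ov I)\sat=\ov{\epsilon_1(I)}\sat$. The main delicate point is that Lemma \ref{Zmodulo} requires both ideals to live in the same ring: the auxiliary $J$ is introduced precisely to transfer the Hilbert-function data of $I$ from $R[Z]$ to $S[Z]$ without distortion, so that the lemma can be applied there.
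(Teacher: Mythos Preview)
Your proof is correct and follows essentially the same route as the paper's: establish the containment $\ov{\epsilon_1(I)}\subseteq\epsilon(\ov I)$ via Proposition~\ref{hyperembed} and the fact that both ideals are embedded, then use Lemma~\ref{Zmodulo} to match Hilbert functions in high degree and conclude equality of saturations. Your introduction of the auxiliary ideal $J=\bigoplus_h\epsilon(I_{\langle h\rangle})Z^h$ to place both inputs of Lemma~\ref{Zmodulo} inside the same ring $S[Z]$ is a small but legitimate tightening---the paper applies the lemma directly to $I\subseteq R[Z]$ and $\epsilon_1(I)\subseteq S[Z]$, which is harmless since the proof of that lemma only uses Hilbert-function data, but your version matches its stated hypotheses more literally.
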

\begin{proof}
First, we observe that $\ov{\epsilon_1(I)}=\epsilon_1(I)_{\langle 0\rangle}$ is an embedded ideal of $R$ by Theorem \ref{hyperplane}.  By Proposition \ref{hyperembed}, $\Hilb\left(\ov{I}\right)_\bullet\geq\Hilb\left(\ov{\epsilon_1(I)}\right)_\bullet$, hence $\epsilon(\ov{I})\supseteq \epsilon(\ov{\epsilon_1(I)})=\ov{\epsilon_1(I)}$. Moreover, $\Hilb\left(\epsilon(\ov{I})\right)_j=\Hilb\left(\ov{I}\right)_j=\Hilb\left(\ov{\epsilon_1(I)}\right)_j$ for $j\gg 0$ by Lemma \ref{Zmodulo}, from which we deduce that $\epsilon(\ov{I})_j=\ov{\epsilon_1(I)}_j$ for $j\gg 0.$ This is enough to complete the proof, since saturation of a homogeneous ideal can be computed by any of its  sufficiently high truncations.
\end{proof}

\section{The main theorem}
In this section we illustrate our main result, which is stated in the next theorem.
We say that $(R,S,\epsilon)$ is {\em (local) cohomology extremal} if, for every homogeneous ideal $I$ of $R$ and all $i$, one has
$\Hilb\left(H^i_{\mm_R}(R/I)\right)\leq \Hilb\left(H^i_{\mm_S}(S/\epsilon(I))\right).$

We recall that both $R$ and $S$ are graded quotients of $A=B=K[X_1,\dots,X_n]$, and that the projective dimension $\projdim_A(M)$ and the Castelnuovo-Mumford regularity $\reg_A(M)$  of a finitely generated graded $A$-module $M$ can be expressed in terms of local cohomology modules as $\max\{n-i : H^i_{\mm_A}(M) \not =0 \}$ and $\max\{d+i : H^i_{\mm_A}(M)_d \not =0 \}$ respectively.
Thus when $(R,S,\epsilon)$ is cohomology extremal, for every homogeneous ideal $I$ of $R$ one has 
$$\projdim(R/I)\leq \projdim(S/\epsilon (I)) \text{  $\quad$ and  $\quad$ }  \reg(R/I)\leq \reg(S/\epsilon (I)),$$ 
and analogous inequalities hold for the embeddings $\epsilon_1$ and $\epsilon_m$ of Theorem \ref{main} and Corollary \ref{conm} below. 

\begin{theorem}\label{main} Let $(R,S,\epsilon)$ be cohomology extremal. Then, $(R[Z],S[Z],\epsilon_1)$ is cohomology extremal. 
\end{theorem}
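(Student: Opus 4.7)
My plan is to reduce to the case of a $Z$-stable ideal, exploit the polynomial-extension structure of the saturation, and bridge to the embedding via Lemma \ref{pregiatolemmaZ2palle}.

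By Proposition \ref{rred}, I may assume $I$ is $Z$-stable. Writing $I=\bigoplus_h I_{\langle h\rangle}Z^h$ and $\epsilon_1(I)=\bigoplus_h L_{\langle h\rangle}Z^h$ (Theorem \ref{hyperplane}), set $I^\infty=\bigcup_h I_{\langle h\rangle}$ and $L^\infty=\bigcup_h L_{\langle h\rangle}$. Remark \ref{ZZm}(b) gives $I\sat=I^\infty R[Z]$ and $\epsilon_1(I)\sat=L^\infty S[Z]$, and the saturatedness of these polynomial extensions in $R[Z]$ and $S[Z]$ forces $I^\infty$ to be saturated in $R$ and $L^\infty$ to be saturated in $S$. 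A spectral-sequence argument attached to $\Gamma_{\mm_{R[Z]}}=\Gamma_{(Z)}\circ\Gamma_{\mm_R R[Z]}$ (degenerating at $E_2$) then yields
\[
H^0_{\mm_{R[Z]}}(R[Z]/I)=I\sat/I,\qquad H^i_{\mm_{R[Z]}}(R[Z]/I)\cong H^{i-1}_{\mm_R}(R/I^\infty)\otimes_K Z^{-1}K[Z^{-1}]\ \ (i\geq 1),
\]
with analogous formulas on the $S[Z]$ side involving $L^\infty$.

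The $i=0$ case reduces to the coefficient-wise inequality $\Hilb(I^\infty)/(1-t)\leq\Hilb(L^\infty)/(1-t)$, equivalently the cumulative estimate $\sum_{k\leq d}\Hilb(I^\infty)_k\leq\sum_{k\leq d}\Hilb(L^\infty)_k$ for every $d$. This follows by combining $\Hilb(I)=\Hilb(\epsilon_1(I))$ with the partial-sum inequality $\Hilb(I+(Z^j))\geq\Hilb(\epsilon_1(I)+(Z^j))$ of Theorem \ref{hyperplane} and letting $j\to\infty$. The case $i=1$ is trivial because $H^0_\mm(R/I^\infty)$ and $H^0_\mm(S/L^\infty)$ both vanish by saturatedness.

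For $i\geq 2$, the heart of the argument is the identification $L^\infty=\epsilon(I^\infty)\sat$. Applying Lemma \ref{pregiatolemmaZ2palle} to the $Z$-stable ideal $I\sat$ yields $\epsilon(I^\infty)\sat=\overline{\epsilon_1(I\sat)}\sat$; combined with the inclusion $\epsilon_1(I\sat)\subseteq\epsilon_1(I)\sat$ (which holds because $\Hilb(I\sat)$ and $\Hilb(I)$ agree in high degrees, so $\epsilon_1(I\sat)$ sits inside the saturation of $\epsilon_1(I)$) and a high-degree Hilbert-function dimension count, the resulting inclusion $\epsilon(I^\infty)\sat\subseteq L^\infty$ of saturated ideals upgrades to an equality. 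Since saturation only affects $H^0$ and here $i-1\geq 1$, we obtain $\Hilb(H^{i-1}_{\mm_S}(S/L^\infty))=\Hilb(H^{i-1}_{\mm_S}(S/\epsilon(I^\infty)))$, and cohomology extremality of $(R,S,\epsilon)$ applied to the $R$-ideal $I^\infty$ closes the argument. The main technical hurdle I anticipate is exactly the identification $L^\infty=\epsilon(I^\infty)\sat$; once in hand, the remainder is formal manipulation of Hilbert series and standard local-cohomology formalism.
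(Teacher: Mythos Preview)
Your reduction to $Z$-stable ideals, the identification $I\sat=I^\infty R[Z]$, and the formula
\[
H^i_{\mm_{R[Z]}}(R[Z]/I)\cong H^{i-1}_{\mm_R}(R/I^\infty)\otimes_K Z^{-1}K[Z^{-1}]\qquad(i\geq 1)
\]
are all correct and match the paper's equations \eqref{sommaecala}--\eqref{perIm1}. Your treatment of $i=0$ and $i\geq 2$ is essentially the paper's argument, rephrased.

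The gap is the case $i=1$. Your claim that ``the saturatedness of $I^\infty R[Z]$ in $R[Z]$ forces $I^\infty$ to be saturated in $R$'' is false: for \emph{any} homogeneous ideal $J\subseteq R$, the extension $JR[Z]$ is automatically saturated in $R[Z]$, because $Z$ is regular on $R[Z]/JR[Z]$. So saturatedness of the extension carries no information about $J$. Concretely, take $R=K[X]$ and $I=(X^2,XZ)\subseteq R[Z]$; this is $Z$-stable with $I^\infty=(X)$, and $(X)$ is not saturated in $K[X]$. Here $H^0_{\mm_R}(R/I^\infty)=K\neq 0$ and correspondingly $H^1_{\mm_{R[Z]}}(R[Z]/I)\neq 0$. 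The same failure applies to $L^\infty$, so your equality $L^\infty=\epsilon(I^\infty)\sat$ in the $i\geq 2$ step should also be read only up to saturation (which is harmless there since $i-1\geq 1$).

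In the paper the $i=1$ case is in fact the most delicate: one must compare $\Hilb\bigl(H^0_{\mm_R}(R/I^\infty)\bigr)$ with $\Hilb\bigl(H^0_{\mm_S}(S/L^\infty)\bigr)$, and neither vanishes. The paper writes each as a difference $\Sigma_1-\Sigma_2$, where $\Sigma_1$ involves $\Hilb\bigl((I^\infty)\sat\bigr)$ and is controlled via Lemma~\ref{pregiatolemmaZ2palle} together with the hypothesis that $(R,S,\epsilon)$ is cohomology extremal in degree $0$, while $\Sigma_2$ involves $\Hilb(I^\infty)$ itself and is bounded using Lemma~\ref{perH2}, i.e.\ the General Restriction inequality for partial sums. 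Your outline contains no substitute for this step.
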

\noindent

By recursion, one immediately obtains the natural generalization to the case of $m$ variables.

\begin{corollary}\label{conm} Let $m$ be a positive integer and  $(R,S,\epsilon)$ be cohomology extremal. Then, $(R[Z_1,\ldots,Z_m],S[Z_1,\ldots,Z_m],\epsilon_m )$ is cohomology extremal. 
\end{corollary}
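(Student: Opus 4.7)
The plan is to argue by induction on $m$, using Theorem \ref{main} as the inductive step. The only real content of the corollary is a careful bookkeeping of how the embedding $\epsilon_m$ is produced from $\epsilon$ by iterating the one-variable construction of Theorem \ref{hyperplane}.

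First, I would set up notation. Define $R^{(0)}:=R$, $S^{(0)}:=S$, and $\epsilon^{(0)}:=\epsilon$; then for $k=1,\dots,m$ set $R^{(k)}:=R^{(k-1)}[Z_k]$, $S^{(k)}:=S^{(k-1)}[Z_k]$, and let $\epsilon^{(k)}:=(\epsilon^{(k-1)})_1$ be the embedding of $S^{(k)}$ obtained by applying Theorem \ref{hyperplane} to $\epsilon^{(k-1)}$ with $Z=Z_k$. By construction $R^{(k)}=R[Z_1,\dots,Z_k]$ and $S^{(k)}=S[Z_1,\dots,Z_k]$, and Proposition \ref{hyperembed} applied at each stage shows that $R^{(k)}$ embeds into $(S^{(k)},\epsilon^{(k)})$. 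Setting $\epsilon_m:=\epsilon^{(m)}$ then gives a well-defined triple $(R[Z_1,\dots,Z_m],S[Z_1,\dots,Z_m],\epsilon_m)$.

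Next, I would run the induction on $m$ for the cohomology extremality property. The base case $m=1$ is precisely Theorem \ref{main}. For the inductive step, suppose that $(R^{(m-1)},S^{(m-1)},\epsilon^{(m-1)})$ is cohomology extremal, i.e.\ for every homogeneous ideal $I'$ of $R^{(m-1)}$ and for every $i$,
\[
\Hilb\left(H^i_{\mm_{R^{(m-1)}}}(R^{(m-1)}/I')\right)\leq \Hilb\left(H^i_{\mm_{S^{(m-1)}}}(S^{(m-1)}/\epsilon^{(m-1)}(I'))\right).
\]
Now apply Theorem \ref{main} to the triple $(R^{(m-1)},S^{(m-1)},\epsilon^{(m-1)})$ with the polynomial variable $Z_m$: the conclusion is precisely that $(R^{(m-1)}[Z_m],S^{(m-1)}[Z_m],(\epsilon^{(m-1)})_1)=(R^{(m)},S^{(m)},\epsilon_m)$ is cohomology extremal, which closes the induction.

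There is no real obstacle here; the only point that requires a moment of care is checking that the iterated construction is legitimate, i.e.\ that the hypotheses of Theorem \ref{main} are genuinely met at each step. This amounts to noting that $R^{(k-1)}$ embeds into $(S^{(k-1)},\epsilon^{(k-1)})$ (granted by Proposition \ref{hyperembed} inductively) and that cohomology extremality is preserved, which is exactly what the inductive hypothesis supplies. Hence the corollary follows.
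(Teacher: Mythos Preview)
Your proof is correct and is exactly the recursion the paper has in mind; the paper simply states that the corollary follows ``by recursion'' from Theorem~\ref{main} without spelling out the bookkeeping you provide. Your careful tracking of how $\epsilon_m$ arises as the iterate $(\epsilon^{(m-1)})_1$ via Theorem~\ref{hyperplane} and Proposition~\ref{hyperembed} is a faithful expansion of that one-line remark.
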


Let $A=K[X_1,\ldots,X_m]$ be a polynomial ring over a field $K$, $I\subseteq A$ a homogeneous ideal and $L$ the unique lex-segment ideal of $A$ with the same Hilbert function as $I$. It was proven in \cite{Sb} Theorem 5.4 that 
\begin{equation}\label{ENR}
\Hilb\left(H^i_{\mm_A}(A /I)\right)\leq\Hilb\left(H^i_{\mm_A}(A /L)\right), \hbox{\;\;\;for all } i.
\end{equation}
This result can be now recovered from the above corollary, since any field $K$ has a trivial embedding $\epsilon_0$, so that $(K,K,\epsilon_0)$ and $(A,A,\epsilon_m)$ are cohomology extremal. By Example \ref{thhree} part (i), we know that  $\epsilon_m(I)$ is the lex-segment ideal $L$. 

Similarly, Corollary \ref{conm} implies the following result, which is the analogous inequality, for local cohomology, of that for Betti numbers proved by Mermin and Murai in \cite{MeMu}.

\begin{theorem} 
Let $A=K[X_1,\dots,X_n]$, $\aa=( X_1^{d_i},\dots,X_r^{d_r})$, with $d_1\leq \cdots \leq d_r.$ Let $I \subseteq A$ be a homogeneous ideal containing $\aa,$ and let $\rm{L}+ \aa$ be the Lex-Plus-Power ideal associated to $I$ with respect to $d_1,\dots,d_r.$ Then,
\begin{equation}\label{LLPC}
\Hilb\left(H^i_{\mm_A}(A /I)\right)\leq\Hilb\left(H^i_{\mm_A}(A /L+\aa)\right), \hbox{\;\;\;for all } i.
\end{equation}
\end{theorem}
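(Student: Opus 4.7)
The plan is to reduce the stated inequality to the cohomology extremality of the Artinian quotient $K[X_1,\ldots,X_r]/\aa$, equipped with its Clements-Lindstr\"om embedding, and then conclude via Corollary~\ref{conm}. Since $I \supseteq \aa$, I would first pass to the ring $R = A/\aa$: setting $\bar I = I/\aa$ and $\bar L = (L+\aa)/\aa$, the surjection $A \twoheadrightarrow R$ identifies the two maximal ideals, so
\[
H^i_{\mm_A}(A/I) = H^i_{\mm_R}(R/\bar I), \qquad H^i_{\mm_A}(A/(L+\aa)) = H^i_{\mm_R}(R/\bar L).
\]
By Example~\ref{thhree}(b), $\bar L$ is precisely the image $\epsilon(\bar I)$ of $\bar I$ under the Clements-Lindstr\"om embedding of $R$, so \eqref{LLPC} is equivalent to cohomology extremality of $(R, R, \epsilon)$.

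Next, decomposing $R = S_r[X_{r+1},\ldots,X_n]$ with $S_r = K[X_1,\ldots,X_r]/\aa$, I would invoke Corollary~\ref{conm} to further reduce the problem to cohomology extremality of $(S_r, S_r, \epsilon)$: the extended embedding $\epsilon_{n-r}$ provided by the corollary is lex-segment based by Example~\ref{thhree}(i), and so agrees with Clements-Lindstr\"om on $R$. This reduction parallels the derivation of \eqref{ENR} from $(K, K, \epsilon_0)$ sketched in the paper, with the Artinian base $S_r$ playing the role of $K$.

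The remaining step is the cohomology extremality of $(S_r, S_r, \epsilon)$, which I would prove by induction on $r$. The base $r=0$ is trivial. For the inductive step, set $Z = X_r$: Theorem~\ref{main} gives that $(S_{r-1}[Z], S_{r-1}[Z], \epsilon_1)$ is cohomology extremal, and I would then descend along $S_{r-1}[Z] \twoheadrightarrow S_{r-1}[Z]/(Z^{d_r}) = S_r$. The key point is that $(Z^{d_r})$ is a Lex-Plus-Power ideal, hence embedded, so by order-preservation of $\epsilon_1$ any ideal of $S_{r-1}[Z]$ containing $(Z^{d_r})$ maps to one again containing $(Z^{d_r})$, and thus $\epsilon_1$ induces an embedding on $S_r$ which matches Clements-Lindstr\"om by the iterative description in Example~\ref{thhree}(ii). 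Since local cohomology of an $S_r$-module with support in either $\mm_{S_{r-1}[Z]}$ or $\mm_{S_r}$ agrees, extremality descends to $(S_r, S_r, \epsilon)$. The main obstacle I anticipate is precisely the compatibility of this descent with the embedding structure: verifying that $\epsilon_1$ carries $(Z^{d_r})$-containing ideals back to $(Z^{d_r})$-containing ideals, and that the induced embedding on $S_r$ coincides with Clements-Lindstr\"om. Both hinge on the iterative construction of Example~\ref{thhree}(ii) together with the order-preservation of embeddings; once secured, the three reductions above chain together to yield \eqref{LLPC}.
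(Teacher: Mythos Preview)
Your reduction is exactly the paper's: pass to $R=A/\aa$ by base independence, write $R=S_r[X_{r+1},\dots,X_n]$ with $S_r=K[X_1,\dots,X_r]/\aa$, and apply Corollary~\ref{conm} once you know $(S_r,S_r,\epsilon)$ is cohomology extremal; the identification of $\epsilon_{n-r}$ with the Clements--Lindstr\"om embedding is also the paper's (it cites \cite{CaKu2}, Remark 2.5, for this). The one place you diverge is in establishing that $(S_r,S_r,\epsilon)$ is cohomology extremal: you set up an induction on $r$ with a descent through $S_{r-1}[Z]\twoheadrightarrow S_{r-1}[Z]/(Z^{d_r})$, worrying about whether $\epsilon_1$ preserves the containment $(Z^{d_r})\subseteq\,\cdot$. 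The paper bypasses this entirely by observing that $S_r$ is Artinian, so for every homogeneous ideal $J\subseteq S_r$ one has $H^0_{\mm_{S_r}}(S_r/J)=S_r/J$ and $H^i_{\mm_{S_r}}(S_r/J)=0$ for $i>0$; since $\Hilb(S_r/J)=\Hilb(S_r/\epsilon(J))$ by definition of an embedding, cohomology extremality is immediate (in fact with equality). Your inductive descent is not wrong---the compatibility you flag does hold, since $(Z^{d_r})$ is embedded and $\epsilon_1$ is order-preserving---but it is unnecessary work, and the obstacle you anticipate disappears once you notice the Artinian shortcut.
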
 
\begin{proof} Let $\bar R=K[X_1,\dots,X_r]/(X_1^{d_1},\dots,X_r^{d_r})$; by Example \ref{thhree} (b) 
$\bar R$ has an embedding $\epsilon$ induced by the Clements and Lindstr\"om Theorem. 
Since $\bar R$ is Artinian we see immediately that $(\bar R,\bar R,\epsilon)$ is cohomology extremal being
$H^0_{\mm_{\bar R}}(\bar R/J)=\bar R/J$ and $H^0_{\mm_{\bar R}}(\bar R/\epsilon(J))=\bar R/\epsilon(J)$ for all homogeneous ideal $J$ of $\bar R.$
Now, $A/\aa$ is isomorphic to $\bar R[X_{r+1},\dots,X_{n}]$, and by Corollary \ref{conm} we know that $\epsilon_{n-r}$ is cohomology extremal. Furthermore, $\epsilon_{n-r}(I (A/\aa))=L(A/\aa)$ (see \cite{CaKu2} Remark 2.5). By Base Independence of local cohomology, we thus have
\[
\begin{array} {lcl} 
\Hilb\left(H^i_{\mm_A}(A /I)\right) 
                & = & \Hilb\left(H^i_{\mm_{A/ \aa}}(A /I)\right) \\
                & \leq &  \Hilb\left(H^i_{\mm_{A/ \aa}}((A/ \aa)/ \epsilon_{n-r}(I(A/ \aa)))\right)\\
                & = & \Hilb\left(H^i_{\mm_A}(A /L+\aa)\right), \hbox{\;\;\;for all } i.
\end {array}
\]
\end{proof}

For the proof of Theorem \ref{main} we need some preparatory facts. First, we observe that for any  homogeneous ideal $I$ of $R[Z]$,
\begin{equation}\label{filanoesaturano}
\ov{I\sat}\sat=\ov{I}\sat,
\end{equation}
since $\ov{I\sat}$ and $\ov{I}$ coincide in high degrees because $I\sat$ and $I$ do.\\
It is not difficult to see that, if $I$ is a homogeneous ideal of $R$, then for all $i>0$ 
\begin{equation*}\label{mia}
\Hilb\left(H^i_{\mm_{R[Z]}}(R[Z]/IR[Z])\right)_h=\sum_{k\geq h}\Hilb\left(H^{i-1}_{\mm_{R}}(R/I)\right)_{k+1},
\end{equation*}
\noindent
cf. for instance  \cite{Sb2} Lemma 2.2  for a proof. As an application,  when $I$ is a $Z$-stable ideal of $R[Z]$ and $i>0$ one has 
\begin{equation}\label{sommaecala}
\Hilb\left(H^i_{\mm_{R[Z]}}(R[Z]/I)\right)_h
=\Hilb\left(H^i_{\mm_{R[Z]}}(R[Z]/I\sat)\right)_h\\
=\sum_{k\geq h}\Hilb\left(H^{i-1}_{\mm_{R}}(R/\ov{I\sat})\right)_{k+1},
\end{equation}
\noindent
which is clearly equivalent to 
\begin{equation}\label{compact}
\Hilb\left(H^i_{\mm_{R[Z]}}(R[Z]/I)\right)= \sum_{j<0} t^j\cdot\Hilb\left( H^{i-1}_{\mm_R}(R/\ov{I\sat})\right), \hbox{\;\;\;for } i>0.
\end{equation}
We shall also need the observation, yielded by \eqref{compact} together with  \eqref{filanoesaturano}, that for a $Z$-stable ideal $I$ of $R[Z]$ 
\begin{equation}\label{perIm1}
\Hilb\left(H^i_{\mm_{R[Z]}}(R[Z]/I)\right) = \sum_{j<0} t^j\cdot\Hilb\left(H^{i-1}_{\mm_{R}}(R/\ov{I}\sat)\right), \hbox{\;\;\;for } i>1.
\end{equation}

\begin{lemma}\label{perH2}
Let $I$ be a $Z$-stable ideal of $S[Z]$ and $d\gg 0$ a fixed integer. Then, for all $j=0,\ldots d$, 
$$\sum_{k=0}^j\Hilb\left(\ov{I\sat}\right)_{d-k}\leq
\sum_{k=0}^j\Hilb\left(\ov{\epsilon_1(I)\sat}\right)_{d-k}.$$ 
\end{lemma}
\begin{proof}
Let $I_d=V_d\oplus V_{d-1} Z_m \oplus   \cdots\oplus V_0Z_m^d$ be a decomposition of $I_d$ as a direct sum of vector spaces.
Since $d\gg 0$, we have that $I\sat=(I_d)\sat$ which by $Z$-stability is  $(I_d):Z_m^\infty$; therefore, $I\sat$ is generated by the elements in $V_0\oplus\cdots\oplus V_d$, which also generate in $S$ the ideal $\ov{I\sat}$. Moreover, cf. Remark \ref{ZZm}(a),  $(\ov{I\sat})_{d-j}$ is exactly the vector space  $V_j$, for $0\leq j\leq d$. The same argument can be repeated for $\epsilon_1(I)$, since it is also $Z$-stable; therefore, the two terms which appear in the inequality that has to be proven  are the values at $d$ of the Hilbert function of $I+(Z_m)^j/(Z_m)^j$ and $\epsilon_1(I) + (Z_m)^j/(Z_m)^j$  respectively, thus the conclusion follows now immediately from Theorem \ref{hyperplane}. 
\end{proof}
 
\begin{proof}[Proof of Theorem \ref{main}]
By Proposition \ref{rred}  we may assume $I$ to be $Z$-stable in order to prove our thesis
$$\Hilb\left(H^i_{\mm_{R[Z]}}(R[Z]/I)\right)\leq \Hilb\left(H^i_{\mm_{S[Z]}}(S[Z]/\epsilon_1(I))\right), \hbox{\;\;\; for all } i.$$
\\
\framebox{$i=0$} For this case it is enough to recall that $\Hilb\left(R[Z]/I\right)=\Hilb\left(S[Z]/\epsilon_1(I)\right)$ and Proposition \ref{hyperplane} yields $\Hilb\left(R[Z]/(I+(Z^j))\right)\leq\Hilb\left(S[Z]/(\epsilon_1(I)+(Z^j))\right)$, for all $j$. Thus, for all $j$, $\Hilb(I:Z^j/I)\leq\Hilb\left(\epsilon_1(I):Z^j/\epsilon_1(I)\right)$ which is equivalent to our thesis if $j$ is chosen to be large enough, as we already observed in Remark \ref{ZZm} (b).\\
\framebox{$i=1$} If $H^1_{\mm_{R[Z]}}(R[Z]/I)=0$ there is nothing to prove. Suppose then that this is not the case.
Now, an application of \eqref{sommaecala} with $d\gg 0$ and for all $j\leq d$  yields
\begin{equation}\label{euna}
\begin{split}
  \Hilb\left(H^1_{\mm_{R[Z]}}(R[Z]/I)\right)_{d-j}&=\sum_{k=0}^j\Hilb\left(H^0_{\mm_{R}}(R/\ov{I\sat})\right)_{d+1-k}\\
&=\underbrace{\sum_{k=0}^j\Hilb\left(\ov{I\sat}\sat\right)_{d+1-k}}_\text{$\Sigma_1(I)$}-\underbrace{\sum_{k=0}^j\Hilb\left(\ov{I\sat}\right)_{d+1-k}}_\text{$\Sigma_2(I)$}.
\end{split}
\end{equation}
We now look at the terms appearing in the first sum. 
%
By Lemma \ref{pregiatolemmaZ2palle},
$\epsilon(\ov{I})\sat$ and $\ov{\epsilon_1(I)}\sat$ are equal, and thus \eqref{filanoesaturano} implies
 \begin{equation}\label{unaemezzo}
\begin{split}
\Sigma_1(I) &=\sum_{k=0}^j\Hilb\left(\ov{I\sat}\sat\right)_{d+1-k}=\sum_{k=0}^j\Hilb\left(\ov{I}\sat\right)_{d+1-k}
\leq \sum_{k=0}^j\Hilb\left(\epsilon(\ov{I})\sat\right)_{d+1-k}\\
&=\sum_{k=0}^j\Hilb\left(\ov{\epsilon_1(I)}\sat\right)_{d+1-k}
=\Sigma_1(\epsilon_1(I)),
\end{split}
\end{equation}
for $\Hilb\left(\ov{I}\sat\right)\leq\Hilb\left(\epsilon(\ov{I})\sat\right)$ descends easily from the fact that $(R,S,\epsilon)$ is cohomology extremal considering cohomological degree $0$. Since  $\Sigma_2(I)\geq \Sigma_2(\epsilon_1(I))$ by Lemma \ref{perH2}, \eqref{unaemezzo} now implies
\begin{equation*}
\begin{split}
  \Hilb\left(H^1_{\mm_{R[Z]}}(R[Z]/I)\right)_{d-j}&=\Sigma_1(I)-\Sigma_2(I)\\
&\leq \Sigma_1(\epsilon_1(I))-\Sigma_2(\epsilon_1(I))) = \Hilb\left(H^1_{\mm_{S[Z]}}(S[Z]/\epsilon_1(I))\right)_{d-j}
\end{split}
\end{equation*}
and this case is  completed.\\
\framebox{$i> 1$} By \eqref{perIm1} and being $\sum_{j<0} t^j$ a series with positive coefficients, we are left to prove the inequality $\Hilb\left(H^i_{\mm_{R}}(R/\ov{I}\sat)\right)\leq
\Hilb\left(H^i_{\mm_{S}}(S/\ov{\epsilon_1(I)}\sat)\right)$ for all $i>0$, or its equivalent $\Hilb\left(H^i_{\mm_{R}}(R/\ov{I})\right)\leq \Hilb\left(H^i_{\mm_{S}}(S/\ov{\epsilon_1(I)})\right)$ for $i>0$. By hypothesis, $\Hilb\left(H^i_{\mm_{R}}(R/\ov{I})\right)\leq \Hilb\left(H^i_{\mm_{S}}(S/\epsilon(\ov{I}))\right)$ for all $i>0$, thus we may conclude if we know, and we do by Lemma \ref{pregiatolemmaZ2palle}, that $\epsilon(\ov{I})$ and $\ov{\epsilon_1(I)}$ have the same saturation.\\
Now the proof of the theorem is complete.
\end{proof}

\section{A Lex-Plus-Power-type inequality for local cohomology}
Let $A=B=K[X_1,\dots,X_n]$, let $\aa=({\bf f})=(f_1,\dots,f_r)$ be the ideal of $A$ generated by a homogeneous regular sequence $f_1,\dots,f_r$ of degrees $d_1\leq \cdots \leq d_r,$ and let $\bb\subseteq B$ be the ideal $(X_1^{d_1},\dots,X_r^{d_r})$. As before, we let $R=A/\aa$ and $S=B/\bb$ and recall that, by Clements-Lindstr\"om Theorem, $S$ has an embedding whose image set consists of the classes in $S$ of all lex-segment ideals of $B$. Henceforth,  such an embedding will be denoted by $\epsilon_{\rm{CL}}$. Thus, we may restate
{\bf EGH} in the following way.
\begin{conjecture}[Eisenbud-Green-Harris]
Let ${\bf f}$ be as above. Then,  $R$ embeds into $(S,\epsilon_{\text{CL}})$.
\end{conjecture}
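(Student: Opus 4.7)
The plan is to attempt induction on $r$, the length of the regular sequence ${\bf f}$. The base case $r=0$ is Macaulay's theorem applied to $A$ itself (Example \ref{thhree}(a)), which provides the lex-segment embedding. For the inductive step, the cleanest strategy would be a Gr\"obner/deformation argument: construct a flat family $\{A/({\bf f}_t)\}$ with ${\bf f}_0={\bf f}$ and ${\bf f}_\infty=(X_1^{d_1},\dots,X_r^{d_r})$, for which the Hilbert function along the fibers is constant thanks to the regularity of the sequence. One would then try to extend this to a flat family of pairs $(({\bf f}_t)\subseteq I_t)$, obtaining at $t=\infty$ an ideal $I_\infty$ containing the pure powers; applying Clements--Lindstr\"om (Example \ref{thhree}(b)) to $I_\infty$ produces a lex-plus-power ideal with the desired Hilbert function, which one pulls back to the original $I$.

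The main obstacle is that such a flat deformation of the \emph{pair} $(({\bf f}), I)$ does not in general exist: although the regular sequence itself deforms flatly (so its initial ideal has the expected Hilbert function), an arbitrary ideal $I \supseteq ({\bf f})$ need not admit a flat deformation preserving its Hilbert function to an ideal containing the pure powers. This is precisely the point at which the naive Gr\"obner degeneration breaks down, and is why EGH has remained open for decades despite considerable effort. Partial progress has been obtained by restricting either the shape of ${\bf f}$ (e.g.\ ${\bf f}$ monomial, handled directly by Clements--Lindstr\"om) or the degree sequence (small $r$, $d_1=2$, etc.), as recorded in \cite{ClLi}, \cite{CaMa}, \cite{Ch}, \cite{FrRi}, \cite{Ab}. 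An honest proof proposal therefore focuses on extending one of these partial results rather than on a frontal attack.

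An alternative, intrinsic strategy would be to bypass deformation entirely and imitate Macaulay's original combinatorial argument inside $R=A/({\bf f})$: develop compression-type operators on ideals of $R$ that preserve Hilbert functions and drive every ideal toward a distinguished ``lex-like'' representative. The hard part here is that there is no intrinsic notion of lex-segment in $R$ independent of the embedding itself, so such compressions would need to be engineered in tandem with the target embedding. In practice this forces one to import external structure, typically via polarization of ${\bf f}$ to a monomial regular sequence in a larger polynomial ring, which returns us to the Clements--Lindstr\"om setting and hence does not go beyond the monomial case without new ideas. My expectation is therefore that any serious attempt would proceed case-by-case in the spirit of the cited partial results.
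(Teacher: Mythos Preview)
The statement you were asked to prove is labeled \emph{Conjecture} in the paper, not Theorem, and the paper provides no proof of it. The Eisenbud--Green--Harris conjecture is an open problem; the paper merely restates it in the language of embeddings and then works under the \emph{hypothesis} that it (or its Artinian reduction) holds, in order to derive consequences for local cohomology (Theorems \ref{mainLPP} and \ref{extremalLPP}). There is therefore no ``paper's own proof'' against which to compare your attempt.

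Your write-up is not a proof proposal so much as an accurate diagnosis of why the conjecture is hard: you correctly identify that the flat degeneration of the regular sequence ${\bf f}$ to pure powers does not carry along an arbitrary overideal $I\supseteq({\bf f})$ while preserving Hilbert functions, and you correctly note that compression/polarization techniques do not escape the monomial setting. These observations are sound, and your list of known partial cases matches the paper's own list (\cite{ClLi}, \cite{CaMa}, \cite{Ch}, \cite{Ab}, \cite{FrRi}). But nothing in your text constitutes an argument establishing $\mathcal{H}_R\subseteq\mathcal{H}_S$ in general; you have written a survey of obstructions, not a proof. If the assignment was genuinely to prove the statement, the honest answer is that no proof is known, which is exactly what the paper says.
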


At the moment there are few cases for which a proof of this conjecture is known, and are essentially contained in  \cite{ClLi}, \cite{CaMa}, \cite{Ch} and \cite{Ab}. Yet, Evans wondered if the following far-reaching result on graded Betti numbers holds. 

\begin{conjecture}[Evans' {\bf LPP}] Assume that ${\bf f}$ satisfies {\bf EGH}. Then, for all homogeneous ideal $I$ of $R$ and all $i$
$$\beta^A_{ij}(R/I):=\Hilb\left(\Tor_i^A(R/I,K)\right)_j\leq \Hilb\left(\Tor_i^B(S/\epsilon_{\rm{CL}}(I),K)\right)_j=:\beta^B_{ij}(S/\epsilon_{\rm{CL}}(I)).$$
\end{conjecture}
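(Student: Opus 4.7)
The plan is to follow the framework of Theorem \ref{main}, replacing local cohomology by graded Betti numbers throughout. Call a triple $(R,S,\epsilon)$ in the embedding framework of Definition \ref{embedda} \emph{Betti-extremal} if, for every homogeneous ideal $I$ of $R$ and every pair $(i,j)$,
$$\beta^A_{ij}(R/I)\;\le\;\beta^B_{ij}(S/\epsilon(I)),$$
Betti numbers being computed over the ambient polynomial rings. Granted this property for a suitable base triple, together with its stability under polynomial extension $(R,S,\epsilon)\rightsquigarrow (R[Z],S[Z],\epsilon_1)$, Evans' {\bf LPP} would follow: one starts from the Mermin--Murai theorem \cite{MeMu}, which amounts exactly to Betti-extremality of the Artinian triple $(\bar R,\bar R,\epsilon_{\mathrm{CL}})$ with $\bar R = K[X_1,\ldots,X_r]/(X_1^{d_1},\ldots,X_r^{d_r})$; one adjoins the remaining variables $X_{r+1},\ldots,X_n$ to reach Betti-extremality of $(S,S,\epsilon_{\mathrm{CL}})$, and then invokes {\bf EGH} (i.e.\ the embedding $R\hookrightarrow (S,\epsilon_{\mathrm{CL}})$) to compare $I$ with $\epsilon_{\mathrm{CL}}(I)$.

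For the ascending step I would mimic the proof of Theorem \ref{main}. By Proposition \ref{rred} and the upper semicontinuity of graded Betti numbers under the Gr\"obner degeneration $\ini_\omega$ with $\omega=(1,\ldots,1,0)$, one reduces to the case of $I\subset R[Z]$ being $Z$-stable. The short exact sequence
$$0\longrightarrow R[Z]/(I:Z)(-1)\xrightarrow{\;\cdot Z\;} R[Z]/I\longrightarrow R/\bar I\longrightarrow 0,$$
together with its analog for $\epsilon_1(I)$ (which is again $Z$-stable by Proposition \ref{hyperembed}), yields long exact sequences on $\Tor$. Iterating $I\mapsto I:Z$ decomposes the Betti table of $R[Z]/I$ into pieces indexed by the slices $I_{\langle h\rangle}\subseteq R$, and by Lemma \ref{pregiatolemmaZ2palle} the corresponding slices of $\epsilon_1(I)$ agree (up to saturation) with the embedded ideals $\epsilon(I_{\langle h\rangle})$. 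Termwise Betti-extremality of $(R,S,\epsilon)$ would then bound each slice's contribution, and summing over $h$ would deliver the desired inequality $\beta^{A[Z]}_{ij}(R[Z]/I)\le\beta^{B[Z]}_{ij}(S[Z]/\epsilon_1(I))$, closing the induction.

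The principal obstacle is that the Tor long exact sequence does not split into short exact pieces: the connecting homomorphisms $\Tor^{A[Z]}_{i+1}(R/\bar I,K)\to \Tor^{A[Z]}_{i}(R[Z]/(I:Z),K)(-1)$ can cancel summands on both sides, so the assembly above controls only alternating sums of Betti numbers rather than each $\beta_{ij}$ individually. In the local cohomology case of Theorem \ref{main} this obstacle evaporates because $H^i_{\mm}$ vanishes in high degrees and the analogous formulas (\ref{sommaecala}), (\ref{perIm1}) become genuine equalities; for $\Tor$ no such truncation is available, which is precisely why the known cases of {\bf LPP} \cite{MePeSt,MeMu} exploit monomiality of $\mathbf f$ to kill the connecting maps via Eliahou--Kervaire resolutions. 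Closing the gap in general would appear to require either a refined vanishing result for these connecting maps along the $Z$-stable filtration, or a polarization of $\mathbf f$ compatible with $\epsilon_{\mathrm{CL}}$ reducing the non-monomial case to \cite{MeMu} --- which is exactly why this statement remains a conjecture, and why the present paper contents itself with the corresponding local cohomology inequality of Theorem \ref{mainLPP} and the consequent bounds on extremal Betti numbers.
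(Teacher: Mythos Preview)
This statement is a \emph{conjecture}; the paper does not prove it, and you correctly acknowledge as much in your final paragraph. However, the obstacle you single out is not the actual one, and your outline contains a gap elsewhere that you do not flag.

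First, the inductive step you worry about in your third paragraph is in fact known: the Remark immediately following the statement of {\bf LPP} in the paper records that, by \cite[Theorem~3.1]{CaKu2} and the discussion after it, if $(R,S,\epsilon)$ is Betti-extremal then so is $(R[Z],S[Z],\epsilon_1)$. So the connecting maps in the $\Tor$ long exact sequence are \emph{not} the bottleneck --- that difficulty has been handled.

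The genuine gap is the base case. In the proof of Theorem~\ref{mainLPP} the Artinian triple $\left(\bar A/(\bar{\mathbf f}),\bar B/\bar\bb,\bar\epsilon_{\rm CL}\right)$ is \emph{trivially} cohomology extremal, because for Artinian rings $H^0_\mm(R/I)=R/I$ and all higher local cohomology vanishes. The corresponding Betti statement --- that $\beta^{\bar A}_{ij}(\bar A/J)\le\beta^{\bar B}_{ij}(\bar B/(\bar L+\bar\bb))$ for every $J\supseteq(\bar{\mathbf f})$ --- is not trivial at all: it is precisely {\bf LPP} in the zero-dimensional case, and it is not known for a general regular sequence $\bar{\mathbf f}$ even under the assumption that $\bar{\mathbf f}$ satisfies {\bf EGH}. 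Your first paragraph sidesteps this by taking the base to be $(\bar R,\bar R,\epsilon_{\rm CL})$ with $\bar R=K[X_1,\dots,X_r]/(X_1^{d_1},\dots,X_r^{d_r})$ and invoking Mermin--Murai; that is fine, but then you are building up to Betti-extremality of $(S,S,\epsilon_{\rm CL})$, which is just Mermin--Murai again. The sentence ``and then invokes {\bf EGH} \dots\ to compare $I$ with $\epsilon_{\rm CL}(I)$'' does no work: {\bf EGH} only asserts $\mathcal H_R\subseteq\mathcal H_S$, an equality of Hilbert functions, and gives no control whatsoever over $\beta^A_{ij}(R/I)$ in terms of $\beta^B_{ij}(S/\epsilon_{\rm CL}(I))$. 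That passage from $S$ back to $R$ is exactly what is missing, and it is missing already in dimension zero.
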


\begin{remark} Consistently with our definition of cohomology extremal embeddings, we will call an embedding 
$(R,S,\epsilon)$ satisfying the inequality predicted by the {\bf LPP} conjecture above, \emph{Betti extremal}. With this terminology, Theorem 3.1 of \cite{CaKu2} together with the subsequent discussion imply that, when $(R,S,\epsilon)$ is Betti extremal, $(R[Z],S[Z],\epsilon_1)$ is Betti extremal as well. 
\end{remark} 

\noindent
The only case in which {\bf LPP} is known so far is when ${\bf f}$ is monomial, see \cite{MePeSt} for a proof when $d_1=\cdots=d_r=2$ and \cite{MeMu} for a proof without restrictions on the degrees. The purpose of this section is to prove in Theorem \ref{mainLPP} an analogous of the {\bf LPP} conjecture  when we consider local cohomology modules instead of $\Tor$ modules.  Theorem \ref{mainLPP} holds not only when ${\bf f}$ is monomial but also in all the cases for which {\bf EGH} is known. More precisely, our assumption is to require that at least an Artinian reduction of $A/({\bf f})$ satisfies {\bf EGH}.

Let $l_1,\dots,l_{n-r}$ be a sequence of linear forms such that $\mathbf {f},l_1,\dots,l_{n-r}$ form an $A$-regular sequence, which always exists provided that $K$ is infinite.  After applying a coordinates change we may assume these linear forms to be  $X_n,\dots,X_{r+1}.$ Let ${\ov{\bf f}}\in \ov{A} =K[X_1,\dots,X_r]$ be  the image of ${\bf f}$ modulo $X_n,\dots, X_{r+1}$. We also let $\ov{B}=\ov{A}$ and $\ov{\bb}$ be the image of $\bb$ in $\ov{B}$. Finally, $\ov{\epsilon}_{\rm CL}$ will denote the Clements-Lindstr\"om embedding of $\ov{B}/\ov{\bb}$.

\begin{theorem}\label{mainLPP} Assume that ${\ov{\bf f}}$ satisfies {\bf EGH}. Then, for all homogeneous ideal $I$ of $R$. 
$$
\Hilb\left(H^i_{\mm_{R}}(R/I)\right) \leq \Hilb\left(H^i_{\mm_{S}}(S/\epsilon_{\rm{CL}}(I))\right). 
$$ 
In other words, $(R,S,\epsilon_{\rm{CL}}(I))$ is cohomology extremal.
\end{theorem}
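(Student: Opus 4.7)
The plan is to reduce to the Artinian setting using the \textbf{EGH} hypothesis on $\overline{\bf f}$, lift cohomology extremality by Corollary~\ref{conm}, and then bridge the ring $R$ with $R'' := A/(\overline{\bf f})$ via a Gr\"obner degeneration.

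First, since $\overline{\bf f}$ satisfies \textbf{EGH}, the Artinian ring $\overline R = \overline A/(\overline{\bf f})$ embeds into $(\overline S, \overline \epsilon_{\rm CL})$, where $\overline S = \overline B/\overline \bb$. Both rings are Artinian with the same Hilbert function, so for every homogeneous $\overline J \subseteq \overline R$ we have $\Hilb(\overline R/\overline J) = \Hilb(\overline S/\overline \epsilon_{\rm CL}(\overline J))$ while higher local cohomologies vanish; thus $(\overline R, \overline S, \overline \epsilon_{\rm CL})$ is trivially cohomology extremal. Applying Corollary~\ref{conm} with $m = n - r$ new variables, identified with $X_{r+1},\dots,X_n$, yields cohomology extremality for $(R'', S, \epsilon_{\rm CL})$, where $R'' = \overline R[X_{r+1},\dots,X_n] \cong A/(\overline{\bf f})$. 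Here we use that $\overline S[X_{r+1},\dots,X_n] = S$ because $\bb$ is monomial in $X_1,\dots,X_r$ alone, and that the induced embedding on $S$ sends each Hilbert function to its lex-plus-power ideal, by \cite{CaKu2} Remark~2.5.

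To bridge $R$ and $R''$, introduce the weight vector $\omega$ assigning weight $1$ to $X_1,\dots,X_r$ and weight $0$ to $X_{r+1},\dots,X_n$. For each $f_i$ of degree $d_i$, the terms of maximal $\omega$-weight $d_i$ are exactly those lying in $\overline A$, so $\ini_\omega(f_i) = \overline{f_i}$; hence $\ini_\omega(({\bf f})) \supseteq (\overline{\bf f})$. Since both $({\bf f})$ and $(\overline{\bf f})$ are complete intersections of degrees $d_1,\dots,d_r$ in $A$, and $A/\ini_\omega(({\bf f}))$ shares the Hilbert series of $A/({\bf f})$ by Gr\"obner theory, all three quotients have equal Hilbert series, which forces $\ini_\omega(({\bf f})) = (\overline{\bf f})$. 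Consequently, for any homogeneous ideal $I$ of $A$ containing ${\bf f}$, $\ini_\omega(I)$ contains $(\overline{\bf f})$ and descends to an ideal $\overline J$ of $R''$ with $\Hilb(\overline J) = \Hilb(I/({\bf f}))$; in particular $\epsilon_{\rm CL}(\overline J) = \epsilon_{\rm CL}(I)$, as the embedding depends only on the Hilbert function.

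Chaining \cite{Sb} Theorem~2.4, base-independence of local cohomology, and the cohomology extremality of $(R'', S, \epsilon_{\rm CL})$ yields
\begin{align*}
\Hilb\left(H^i_{\mm_R}(R/I)\right) &\leq \Hilb\left(H^i_{\mm_A}(A/\ini_\omega(I))\right) = \Hilb\left(H^i_{\mm_{R''}}(R''/\overline J)\right) \\
&\leq \Hilb\left(H^i_{\mm_S}(S/\epsilon_{\rm CL}(I))\right),
\end{align*}
which is the desired inequality. The main technical hurdle is the identification $\ini_\omega(({\bf f})) = (\overline{\bf f})$, equivalently that $\{f_1,\dots,f_r\}$ is already a Gr\"obner basis of $({\bf f})$ with respect to $\omega$; this is handled cleanly by the Hilbert-function comparison above, crucially because $\overline{\bf f}$ remains a regular sequence of the prescribed degrees in $A$.
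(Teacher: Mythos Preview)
Your proof is correct and follows essentially the same strategy as the paper: reduce to the Artinian base case via the {\bf EGH} hypothesis on $\overline{\bf f}$, lift cohomology extremality to $A/(\overline{\bf f})$ by adjoining variables (you cite Corollary~\ref{conm}, the paper cites Theorem~\ref{main} directly), and bridge $R$ to $A/(\overline{\bf f})$ with the weight-$\omega$ degeneration and \cite{Sb} Theorem~2.4. Your argument is in fact slightly more explicit at the degeneration step: you prove $\ini_\omega(({\bf f}))=(\overline{\bf f})$ by a Hilbert-function comparison, whereas the paper only records the containment $\overline{\bf f}\subseteq \ini_\omega(J)$ (its line ``${\bf f}\in P$'' should read $\overline{\bf f}\in P$), which is all that is actually needed.
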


\begin{proof} Let $I$ be a homogeneous ideal of $R$ and let $J$ denote its pre-image in $A$. Clearly, $({\bf f})\subseteq J$ and, if we let $\omega$ be the weight vector with entries $\omega_i=1$ for all $i\leq r$ and $0$ otherwise, then ${\bf f}\in P:=\ini_\omega(J)$. By hypothesis $\ov{A}/({\ov{\bf f}})$ embeds into $(\ov{B}/\ov{\bb}, \ov{\epsilon}_{\rm{CL}})$, and moreover being both rings Artinian $\left(\ov{A}/({\ov{\bf f}}),\ov{B}/\ov{\bb}, \ov{\epsilon}_{\rm{CL}}\right)$ is trivially cohomology extremal. Theorem \ref{main} now yields that $(A/({\ov{\bf f}})A,S, \epsilon)$ is also cohomology extremal and $\epsilon$, which is obtained by extending $\ov{\epsilon}_{\rm{CL}}$, is precisely the Clements-Lindstr\"om embedding $\epsilon_{\rm CL}$ of $S$, cf. Example \ref{thhree} part (ii). By Base Independence of local cohomology and by \cite{Sb} Theorem 2.4,
$\Hilb\left(H^i_{\mm_{R}}(R/I)\right) = \Hilb\left(H^i_{\mm_{A}}(A/J)\right)\leq \Hilb\left(H^i_{\mm_{A/(\ov{\bf f})A}}(A/P)\right)$. 
Finally, since $I$ has the same Hilbert function as the image  of $P$ in $A/(\ov{\bf f})A$ and  $\left(A/({\ov{\bf f}})A,S, \epsilon_{\rm{CL}}\right)$ is cohomology extremal, we have $\Hilb\left(H^i_{\mm_{A/(\ov{\bf f})A}}(A/P)\right)$ $\leq$ $\Hilb\left(H^i_{\mm_{S}}(S/\epsilon_{\rm{CL}}(I)))\right)$ as desired.  
\end{proof}

\begin{remark}
Clearly, if {\bf EGH} were true in general then the assumption on ${\ov{\bf f}}$ in Theorem \ref{mainLPP} would be trivially satisfied and  $(R,S,\epsilon_{\rm{CL}})$ would  be cohomology extremal. It is proven in \cite{CaMa} that if ${\ov{\bf f}}$ satisfies {\bf EGH} then  ${\bf f}$ does, whereas at this point we do not know about the converse.  
A simple flat deformation argument together with the results of \cite{MeMu} and \cite{MePeSt} shows that {\bf LPP} holds true when ${\bf f}$ is a Gr\"obner basis with respect to some given term order $\tau$. We would like to point out that, under this assumption, the conclusion of Theorem \ref{mainLPP} holds as well, and we prove our claim in the following lines.  
Let ${\bf g}=\ini_\tau(f_1),\ldots,\ini_\tau(f_r)$. As in the proof of Theorem \ref{mainLPP}, by \cite{Sb} Theorem 2.4 it is sufficient to bound above $\Hilb\left(H^i_{\mm_{A/({\bf g})}}\left(A/\ini_\tau(({\bf f})+I)\right)\right)$. By hypothesis ${\bf g}$ form a monomial regular sequence in $A$, therefore it is easy to see that there are linear forms $l_1,\ldots,l_{n-r}$ such that $A/({\bf g},l_1,\ldots,l_{n-r})$ is Artinian and isomorphic to 
$K[X_1,\ldots,X_r]/(X_1^{d_1},\ldots,X_r^{d_r})$, which has the Clements-Lindstr\"om embedding. Therefore, the sequence $\ov{\bf g}$ in $A/(l_1,\ldots,l_{n-r})$ satisfies {\bf EGH}, and thus, verifies the hypothesis of Theorem \ref{mainLPP}, which now yields what we wanted. 
\end{remark}

\section{Extremal Betti numbers and {\bf LPP}}
In this section we show how to derive directly from Theorem \ref{mainLPP} a special case of {\bf LPP} for Betti numbers. Precisely, we prove that the inequality predicted by {\bf LPP} holds for those  Betti numbers which in the literature,  following  \cite{BaChPo}, are called {\em extremal}.
Furthermore, we will show an inclusion between the regions of the Betti tables of $R/I$ and $S/\epsilon_{\rm{CL}}$ where non-zero values may appear, and which are outlined by the positions of the corresponding extremal Betti numbers. 

As in the previous section, let ${\bf f}\in A=K[X_1,\dots,X_n]$ be a homogeneous regular sequence of  degrees $d_1\leq \dots \leq d_r$. By extending the field we may assume that $\vert K \vert = \infty$ and up to a change of coordinates, that ${\bf f}, X_n,\dots, X_{n-r+1}$ form a regular sequence as well. 
Let $M$ be a finitely generated graded $A$-module; the following definition was introduced in \cite{BaChPo} when $M=A/I$. A non-zero $\beta^A_{ij}(M)=\dim_K \Tor^A_i(M,K)_j$ such that $\beta_{rs}(M)=0$ whenever $r\geq i, s\geq j+1$ and  $s-r \geq j-i$ is called {\em extremal Betti number} of $M$. A pair of indexes $(i,j-i)$ such that $\beta_{ij}(M)$ is extremal is called a {\em corner} of $M$.  The reason for this terminology is that extremal Betti numbers correspond to certain corners in the output of Macaulay2 \cite{M2} command for computing Betti diagrams.
Let $I$ be a homogeneous ideal of $A$ and denote by $\Gin(I)$ the generic initial ideal of $I$ with respect to the reverse lexicographic order, cf. \cite{Gr2} and \cite{Ei} for more details on generic initial ideals. The interest in  extremal Betti numbers comes from the fact, proved in \cite{BaChPo} and  \cite{Tr}, that $A/I$ and $A/\Gin(I)$ have the same extremal Betti numbers, and therefore same corners. Since projective dimension and Castelnuovo-Mumford regularity can be computed from corners, this result is a strengthening of the well-known Bayer-Stillman Criterion \cite{BaSt}. In the proof, one can use the fact that the extremal Betti numbers of $M$ can be computed directly from the Hilbert functions of certain local cohomology modules, and in particular from the considerations in \cite{BaChPo} or \cite{Tr} one can deduce that, for any finitely generated graded $A$-module $M$
\begin{equation}\label{COR}
\beta_{ij}^A(M)=\Hilb\left(H^{n-i}_{\mm_A}(M)\right)_{j-n}, 
\end{equation}
when $(i,j-i)$ is a corner of $M$.

For the proof of the next theorem, we need to recall the definition of  partial Castelnuovo-Mumford regularities and its characterization: Given a finitely generated $A$-module $M$, for any integer $0\leq h \leq \dim M$, we have  
 \[\reg_h(M):= \sup \{j-i :  \beta_{ij}^A(M)\not = 0, i\geq n-h\}= \sup \{j+i : (H^{i}_{\mm_A}(M))_{j}\not =0 , i\leq h \},\]
see \cite{Tr} Theorem 3.1 (i).
 If we  set $\reg_{-1}(M)= - \infty,$ clearly we have $\reg_{-1}(M)\leq \reg_0(M) \leq \reg_1(M)\leq \cdots \leq \reg_n(M)=\reg(M)$. Moreover, the corners of the Betti table of $M$ can be determined by looking at the strict inequalities in the previous sequence: $(i,j-i)$ is a corner of $M$ if and only if
\begin{equation}\label{corners}
\reg_{n-i-1}(M) < \reg_{n-i}(M)
\quad \text{ and }  \quad 
j-i=\reg_{n-i}(M);
\end{equation}  
in particular 
\begin{equation}\label{reg-cor}
\reg_h(M)= \sup \{j-i : (i,j-i) \text{ is a corner of } M \text{ and }  i\geq n-h\}. 
\end{equation}

\begin{theorem}[{\bf LPP} for extremal Betti numbers]\label{extremalLPP}
Let $\ov{\bf f}$ satisfy {\bf EGH}. Then, for all homogeneous ideals $I$ of $R$, 
$$\beta^A_{ij}(R/I)\leq \beta^A_{ij}(S/\epsilon_{\rm{CL}}(I)),$$
when $(i,j-i)$ is a corner of $S/\epsilon_{\rm{CL}}(I).$
\end{theorem}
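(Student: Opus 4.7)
The plan is to reduce the extremal-Betti-number inequality to the cohomology extremal inequality of Theorem \ref{mainLPP}, using the characterization of corners in terms of partial Castelnuovo-Mumford regularities \eqref{corners}--\eqref{reg-cor} and the identity \eqref{COR}.

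First I would apply Theorem \ref{mainLPP} to obtain
\[
\Hilb(H^k_{\mm_A}(R/I)) \leq \Hilb(H^k_{\mm_A}(S/\epsilon_{\rm{CL}}(I))) \quad \text{for all } k,
\]
and feed this into the local-cohomology formula $\reg_h(M) = \sup\{j+i : (H^i_{\mm_A}(M))_j \neq 0,\, i \leq h\}$ to conclude
\[
\reg_h(R/I) \leq \reg_h(S/\epsilon_{\rm{CL}}(I)) \quad \text{for all } h.
\]

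Next, since $(i, j-i)$ is a corner of $S/\epsilon_{\rm{CL}}(I)$, \eqref{corners} reads $\reg_{n-i-1}(S/\epsilon_{\rm{CL}}(I)) < j-i = \reg_{n-i}(S/\epsilon_{\rm{CL}}(I))$, so the transport of the previous step gives
\[
\reg_{n-i-1}(R/I) < j-i, \qquad \reg_{n-i}(R/I) \leq j-i.
\]
If $\beta^A_{ij}(R/I) = 0$, the desired inequality is immediate (the right-hand side being positive because $(i,j-i)$ is a corner of $S/\epsilon_{\rm{CL}}(I)$). Otherwise $\beta^A_{ij}(R/I) \neq 0$ forces $\reg_{n-i}(R/I) \geq j-i$ by \eqref{reg-cor}, hence $\reg_{n-i}(R/I) = j-i > \reg_{n-i-1}(R/I)$, so $(i, j-i)$ is also a corner of $R/I$ by \eqref{corners}.

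With $(i, j-i)$ a corner of both $R/I$ and $S/\epsilon_{\rm{CL}}(I)$, the identity \eqref{COR} converts both Betti numbers into coefficients of Hilbert series of local cohomology modules in the same position, and Theorem \ref{mainLPP} closes the argument:
\[
\beta^A_{ij}(R/I) = \Hilb(H^{n-i}_{\mm_A}(R/I))_{j-n} \leq \Hilb(H^{n-i}_{\mm_A}(S/\epsilon_{\rm{CL}}(I)))_{j-n} = \beta^A_{ij}(S/\epsilon_{\rm{CL}}(I)).
\]
The step I expect to require the most care is the transfer of the corner condition from $S/\epsilon_{\rm{CL}}(I)$ to $R/I$: one must be sure that no nonzero $\beta^A_{rs}(R/I)$ with $r \geq i$, $s \geq j+1$, $s-r \geq j-i$ can occur, which is exactly what the strict inequality $\reg_{n-i-1}(R/I) < j-i$ combined with $\reg_{n-i}(R/I) \leq j-i$ rules out via \eqref{reg-cor}.
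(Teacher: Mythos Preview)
Your argument is correct and follows essentially the same route as the paper's proof: both derive $\reg_h(R/I)\leq \reg_h(S/\epsilon_{\rm CL}(I))$ from Theorem~\ref{mainLPP}, then split into the case where $(i,j-i)$ is a corner of $R/I$ (handled via \eqref{COR}) and the complementary case where $\beta^A_{ij}(R/I)=0$. One small correction: when you write ``$\beta^A_{ij}(R/I)\neq 0$ forces $\reg_{n-i}(R/I)\geq j-i$ by \eqref{reg-cor}'', you should instead invoke the defining formula $\reg_h(M)=\sup\{j-i:\beta^A_{ij}(M)\neq 0,\ i\geq n-h\}$, since \eqref{reg-cor} is stated only in terms of corners, which you have not yet established for $R/I$ at that point.
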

\begin{proof} If $n=1$ there is nothing to prove. Thus, let $n\geq 2$ and observe that, if $(i,j-i)$ is also a corner of  $R/I$ then the conclusion is straightforward by the use of Theorem \ref{mainLPP} and \eqref{COR}. Otherwise, since  Theorem \ref{mainLPP} yields $\reg_h(R/I)\leq \reg_h(S/\epsilon_{\rm{CL}}(I))$ for all $h$,  then $\reg_{n-i}(R/I)<\reg_{n-i}(S/\epsilon_{\rm{CL}}(I))=j-i.$ Hence $\beta^A_{ij}(R/I)=0.$
\end{proof}

Furthermore, under the same assumption of the above theorem, we have the following result, see also Figure \ref{bettitab}.

\begin{figure}[here] 
\begin{center}
\includegraphics[scale=0.15]{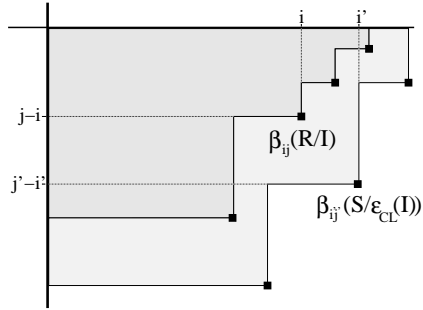}
\end{center}
\caption{The Betti diagrams of $R/I$ and $S/\epsilon_{\rm{CL}}(I))$, and their extremal Betti numbers.}
\label{bettitab}
\end{figure}
 
\begin{theorem}[Inclusion of Betti regions]\label{inc-region} Let $(i,j-i)$  be a corner of $R/I$. Then there exists 
a corner  $(i',j'-i')$ of  $S/\epsilon_{\rm{CL}}(I))$ such that  $i\leq i'$ and $j-i \leq j'-i'.$
\end{theorem}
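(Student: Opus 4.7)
The plan is to transfer the corner of $R/I$ to a corner of $S/\epsilon_{\rm{CL}}(I)$ via the partial Castelnuovo--Mumford regularities $\reg_h(-)$ introduced just before Theorem \ref{extremalLPP}. The key input will be the coefficient-wise inequality on Hilbert series of local cohomology modules supplied by Theorem \ref{mainLPP}: using the cohomological formula
\[
\reg_h(M)\;=\;\sup\{\, j+t \,:\, (H^t_{\mm_A}(M))_j \neq 0,\ t \leq h\,\}
\]
recalled before \eqref{corners}, together with the Base Independence of local cohomology, this immediately yields $\reg_h(R/I) \leq \reg_h(S/\epsilon_{\rm{CL}}(I))$ for every $h$, exactly as in the proof of Theorem \ref{extremalLPP}.

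With this inequality in hand, I would argue as follows. Since $(i,j-i)$ is a corner of $R/I$, the characterization \eqref{corners} gives $\reg_{n-i}(R/I) = j-i$, and hence $\reg_{n-i}(S/\epsilon_{\rm{CL}}(I)) \geq j-i$. I would then consider the non-decreasing chain $\reg_{-1}(S/\epsilon_{\rm{CL}}(I)) < \reg_0(S/\epsilon_{\rm{CL}}(I)) \leq \cdots \leq \reg_{n-i}(S/\epsilon_{\rm{CL}}(I))$ and let $h$ be the smallest index in $\{0,1,\dots,n-i\}$ for which $\reg_h(S/\epsilon_{\rm{CL}}(I)) \geq j-i$. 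By minimality of $h$ one has $\reg_{h-1}(S/\epsilon_{\rm{CL}}(I)) < \reg_h(S/\epsilon_{\rm{CL}}(I))$, so the characterization \eqref{corners} identifies $(n-h,\;\reg_h(S/\epsilon_{\rm{CL}}(I)))$ as a corner of $S/\epsilon_{\rm{CL}}(I)$. Setting $i' := n-h$ and $j'-i' := \reg_h(S/\epsilon_{\rm{CL}}(I))$, one obtains $i' \geq n-(n-i) = i$ and $j'-i' \geq j-i$, which is the desired inclusion.

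I do not foresee any real obstacle: once the passage from the local-cohomology inequality of Theorem \ref{mainLPP} to the inequality $\reg_h(R/I) \leq \reg_h(S/\epsilon_{\rm{CL}}(I))$ is made, the rest is a bookkeeping argument based on locating the ``first jump'' of the sequence $\reg_\bullet(S/\epsilon_{\rm{CL}}(I))$ that reaches the value $j-i$. The only delicate point is that the extracted index $h$ actually lies in $\{0,1,\dots,n-i\}$, which is guaranteed by $\reg_{n-i}(S/\epsilon_{\rm{CL}}(I)) \geq j-i$ together with the monotonicity of the partial regularities; the finiteness of each $\reg_h(S/\epsilon_{\rm{CL}}(I))$ is ensured by the fact that $S/\epsilon_{\rm{CL}}(I)$ is finitely generated, so no boundary issue arises.
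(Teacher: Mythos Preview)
Your proposal is correct and follows essentially the same approach as the paper: both derive $\reg_h(R/I)\leq \reg_h(S/\epsilon_{\rm CL}(I))$ from Theorem~\ref{mainLPP} and then extract a suitable corner of $S/\epsilon_{\rm CL}(I)$ from the resulting inequality $\reg_{n-i}(S/\epsilon_{\rm CL}(I))\geq j-i$. The only cosmetic difference is that the paper invokes \eqref{reg-cor} directly to pick a corner $(i',j'-i')$ with $i'\geq i$ realizing the value $\reg_{n-i}(S/\epsilon_{\rm CL}(I))$, whereas you locate the first jump of $\reg_\bullet(S/\epsilon_{\rm CL}(I))$ reaching $j-i$; the two procedures may single out different corners, but each certifies the claimed inclusion.
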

\begin{proof} Theorem \ref{mainLPP} implies that $\reg_h(R/I)\leq \reg_h(S/\epsilon_{\rm{CL}}(I))$ for all $h$. Since $(i,j-i)$ is a corner of $R/I$, by \eqref{corners} $\reg_{n-i}(R/I)=j-i$ and, therefore,  $\reg_{n-i}(S/\epsilon_{\rm{CL}}(I))\geq j-i.$   By  \eqref{reg-cor} we know that $\reg_{n-i}(S/\epsilon_{\rm{CL}}(I))= j'-i'$ for some corner $(i',j'-i')$ of  $S/\epsilon_{\rm{CL}}(I)$ satisfying $i'\geq n-(n-i)=i$, as we desired.
\end{proof}

%

\end{document}